\documentclass{article}
\usepackage{graphicx} 
\usepackage{tabularx}
\usepackage{booktabs}

\usepackage{amsmath,amssymb,amsthm,mathtools,mathrsfs}
\usepackage{xcolor}
\usepackage{caption}
\usepackage{graphicx}
\usepackage{mathtools}
\usepackage{caption}
\usepackage{floatrow}
\usepackage{tikz}
\usepackage{cite}
\usepackage{comment}
\usepackage{floatrow}
\usepackage{subcaption}
\usepackage{enumitem}
\usepackage{physics}
\usepackage[all,tips]{xy}
\usepackage{hyperref}
\usepackage{float}

\usepackage{array, booktabs}

\newcommand{\wtH}{\widetilde{H}}

\newtheorem{remark}{Remark}

\SelectTips{cm}{11}

\newtheorem{defn}{Definition}

\newtheorem{thm}{Theorem}

\newtheorem{cor}{Corollary}
\newtheorem{prop}{Proposition}

\newtheorem{theorem}{Theorem}
\newtheorem{corollary}{Corollary}
\newtheorem{example}{Example}

\newcommand\tran{\mkern-2mu\raise1.25ex\hbox{$\scriptscriptstyle\top\hspace{0.5mm}$}\mkern-3.5mu}

\usepackage[noabbrev]{cleveref} 
\crefname{rem}{Remark}{Remarks}
\crefname{exam}{Example}{Examples}
\crefname{assum}{Assumption}{Assumptions}
\crefname{algorithm}{Algorithm}{Algorithms}
\crefname{prop}{Proposition}{Propositions}
\crefname{propy}{Property}{Properties}
\crefname{cor}{Corollary}{Corollaries}
\crefname{lem}{Lemma}{Lemmas}
\crefname{section}{Section}{Sections}
\crefname{thm}{Theorem}{Theorems}
\crefname{defn}{Definition}{Definitions}
\crefname{figure}{Fig.}{Fig.}
\Crefname{figure}{Figure}{Figures}
\crefname{equation}{}{}

\title{Discrete-time generalized canonical transformations for non-autonomous systems}
\author{Leonardo Colombo$^1$, David Mart\'in de Diego$^2$,\\ Riccardo Muradore$^3$, Damiano Rigo$^4$, Nicola Sansonetto$^5$.\thanks{The authors acknowledge financial support from the Spanish Ministry of Science and Innovation under grants PID2022-137909NB-C21, PCI2024-155047-2,  Severo Ochoa Programme for Centres of Excellence in R\&D (CEX2023-001347-S) and by iRoboCity2030-CM, Robótica Inteligente para Ciudades Sostenibles (TEC-2024/TEC-62), funded by the Programas de Actividades I+D en Tecnologías en la Comunidad de Madrid.\\ 
$^1$Centre for Automation and Robotics (CSIC-UPM), Ctra. M300 Campo Real, Km 0,200, Arganda del Rey - 28500 Madrid, Spain, {\tt\small leonardo.colombo@csic.es},\\
$^2$Institute of Mathematical Sciences (CSIC-UAM-UCM-UC3M), Calle Nicolas Csbrera 13-15, Cantoblanco - Madrid, Spain, {\tt\small david.martin@icmat.es},\\
$^3$Department of Engineering for Innovation Medicine, University of Verona, Strada le Grazie 15, Verona, 37134, Italy {\tt\small riccardo.muradore@univr.it},\\
$^4$School of Mathematics, Jilin University, 2699 Qianjin Street, Changchun City, Jilin, 130012, China, {\tt\small damianorigo@jlu.edu.cn},\\
$^5$Department of Computer Science, University of Verona, Strada Le Grazie 15, Verona, Italy, {\tt\small nicola.sansonetto@univr.it}}}
\date{\empty}

\begin{document}

\maketitle

\begin{abstract}
A dynamical system is said to be \emph{non-autonomous} when the differential equations describing its evolution depends explicitly on time.  
Among the various geometric approaches to investigate such systems, the cosymplectic formulation provides a natural framework that extends symplectic geometry to time-dependent Hamiltonians systems.  
However, preserving the associated geometric structures under numerical discretization remains a challenging problem: standard integrators, such as explicit Euler schemes, generally fail to conserve the cosymplectic volume or the underlying Poisson structure.

In this work we propose a geometric method for the discretization of non-autonomous Hamiltonian systems based on \emph{generalized canonical transformations}.  
The approach constructs a symplectomorphism on the extended phase space $T^*(Q \times \mathbb{R})$ whose projection onto $T^*Q \times \mathbb{R}$ defines a structure-preserving discrete flow.  
We show that this formulation guarantees the preservation of key invariants, including the volume form, the Poisson bracket, and the symplectic structure on each time fiber.  
\end{abstract}

\section{Introduction}

The discretization of Hamiltonian systems has long been a central topic in geometric numerical integration \cite{sanz-serna,Hairer,blanes}. 
Symplectic integrators, in particular, are well known for preserving the underlying geometric structure of Hamiltonian dynamics, 
providing consistent tools for long-term stability and energy analysis.  
Their use across different areas of physics, control, and applied mathematics has motivated a broad class of structure-preserving schemes that retain essential invariants of continuous dynamics, such as volume and momentum conservation.

Although the geometric theory of discrete Hamiltonian systems is well established for \emph{autonomous} dynamics (see, e.g., \cite{kang, Hairer}), the case of \emph{non-autonomous} systems remains less systematically treated.
A possible way to study non-autonomous mechanical systems consists in considering the extended phase space $T^*(Q \times \mathbb{R})$, with local coordinates $(q,t, p, p_t)$, where the time is treated as variable with its own conjugate momentum.
In this setting, the dynamics is governed by the canonical extended  symplectic form on $T^*(Q\times \mathbb{R})$
\[
\Omega_0 = dq\wedge dp + dt\wedge dp_t,
\]
while the time-dependent space $T^*Q \times \mathbb{R}$ is endowed with a different geometry induced by the canonical \emph{cosymplectic} structure $(\omega_0,\eta) = (dq\wedge dp,\,dt)$.
Observe that now $\omega_0$ is a presymplectic 2-form since $\partial/\partial t$ is in $\ker \omega_0$.  
A central question then arises: \emph{can one construct numerical integrators that preserve the canonical geometric structures associated to this cosymplectic framework?}

Marthinsen and Owren in~\cite{marthinsen2016geometric} address this problem by characterizing discrete canonical transformations as symplectomorphisms on the extended space $T^*(Q \times \mathbb{R})$.  
Their analysis provides local matrix conditions that ensure that the extended symplectic form $\Omega_0$ is preserved by the discrete flow.  
However, their construction remains entirely within the symplectic setting and does not explicitly describe how such transformations act on the projected, time-dependent space $T^*Q \times \mathbb{R}$, 
where the relevant invariants are the cosymplectic volume and the Poisson bracket associated with $(\omega_0,\eta)$.

To better appreciate the role of these invariants, consider a time-dependent Hamiltonian $H:T^*Q \times \mathbb{R}\to\mathbb R$.
For sake of simplicity, take $Q=\mathbb R$ and $H(q,p)=\tfrac12e^{-t}(p^2+q^2)$, so that the continuous dynamics describes a harmonic oscillator with dissipation. 
The exact flow is given by
\[
\begin{pmatrix}q_{k+1}\\p_{k+1}\end{pmatrix}
=
\begin{pmatrix}
\cos\theta_h & \sin\theta_h \\
-\sin\theta_h & \cos\theta_h
\end{pmatrix}
\begin{pmatrix}
q_k \\
p_k
\end{pmatrix},
\]
where
$
\theta_h = e^{-t_k} - e^{-(t_k+h)},
$
and it preserves the canonical Poisson bracket or the symplecticity on fibers on the projection of $T^*Q \times \mathbb{R}$ to ${\mathbb R}$  since
\[
\begin{pmatrix}
\cos\theta_h & \sin\theta_h \\
-\sin\theta_h & \cos\theta_h
\end{pmatrix}^T\begin{pmatrix}
0 & 1 \\
-1 & 0
\end{pmatrix}
\begin{pmatrix}
\cos\theta_h & \sin\theta_h \\
-\sin\theta_h & \cos\theta_h
\end{pmatrix}=
\begin{pmatrix}
0 & 1 \\
-1 & 0
\end{pmatrix}
\]
However, if we use
an explicit Euler discretization on $T^*Q \times \mathbb{R}$:
\[
q_{k+1}=q_k+h\,\partial_p H(q_k, t_k, p_k),\quad
t_{k+1}=t_k+h,\quad
p_{k+1}=p_k-h\,\partial_q H(q_k, t_k, p_k),
\]
and we  denote by $\Phi_k:(q_k, t_k, p_k)\mapsto (q_{k+1}, t_{k+1}, p_{k+1})$ the one–step map defined by this explicit Euler update, then
\[
\begin{pmatrix}q_{k+1}\\p_{k+1}\end{pmatrix}
=
\begin{pmatrix}
1 & he^{-t_k} \\
- he^{-t_k} & 1
\end{pmatrix}
\begin{pmatrix}
q_k \\
p_k
\end{pmatrix},
\]
but
\[
\det \begin{pmatrix}
1 & he^{-t_k} \\
- he^{-t_k} & 1
\end{pmatrix}=1+h^2e^{-2t_k}>1
\]
and then the integrator is not symplectic. 
This simple example highlights the importance of enforcing symplecticity on each time slice $T^*Q \times \{t\}$ for the numerical methods:  
without ``slice-simplecticity'', even elementary oscillators exhibit spurious energy drift,  
whereas structure-preserving schemes maintain bounded error and reproduce the correct geometric features of the continuous flow.

The approach developed in this paper directly addresses this limitation.  
We construct discrete canonical transformations on the extended phase space $T^*(Q \times \mathbb{R})$ whose projections onto $T^*Q \times \mathbb{R}$ 
define structure-preserving integrators for non-autonomous Hamiltonian systems\cite{asorey1983generalized}.  
This construction relies on the notion of \emph{discretization maps}~\cite{barbero2023retraction}, 
which provide a geometric mechanism to approximate flows on tangent and cotangent bundles while preserving their intrinsic structures.  
By applying this formalism to the extended setting, we obtain a discrete flow that preserves both the volume $\omega_0^{\,n}\wedge\eta$ and the Poisson structure associated with the cosymplectic structure $(\omega_0,\eta)$, thereby providing a faithful discrete analogue of the continuous non-autonomous Hamiltonian dynamics.  
Crucially, for each fixed $t$, the induced map 
$\phi_{t,h}:T^*Q\to T^*Q$ is a symplectomorphism, ensuring per–time-slice symplecticity and geometric consistency of the discrete evolution (see \cite{campos} for a discrete Lagrangian description).

Our formulation extends the local results of \cite{marthinsen2016geometric} to a global, coordinate-free framework that connects symplectic and cosymplectic discretizations through the projection 
\(\mu:T^*(Q \times \mathbb{R})\to T^*Q \times \mathbb{R}\).  
This provides the first geometric proof of structure preservation for discrete canonical transformations in the non-autonomous case, 
bridging the discretization-map formalism of \cite{barbero2023retraction} with the canonical transformation theory of \cite{asorey1983generalized}.

\medskip
\noindent\textbf{Paper outline.}
Section \ref{section: Geometry of non-autonomous Hamiltonian systems} reviews the geometric background on extended phase spaces, cosymplectic structures, and Poisson morphisms.  
Section \ref{section: Discrete canonical transformations and structure-preserving integrators} introduces discrete canonical transformations generated by discretization maps and establishes their preservation properties.  
Section \ref{section: Numerical Simulations} presents numerical examples, including dissipative and mechanical systems with time-dependent potential, that illustrate the behaviour of the proposed integrators compared to standard Runge–Kutta schemes.
Section \ref{section: Application to an electric particle in an electromagnetic field} shows an example of how the system is well-suited for applications in gauge-invariant systems.
Finally, Section \ref{section: Conclusions and Future Work} summarizes the main conclusions and outlines future research directions.

All the functions, vector fields, forms and objects treated in this paper are assumed to be smooth unless explicitly stated.

\section{Geometry of non-autonomous Hamiltonian systems}\label{section: Geometry of non-autonomous Hamiltonian systems}

\subsection{Hamiltonian mechanics}
Let $Q$ be an $n$-dimensional configuration manifold of a mechanical system with local coordinates $(q^A)$, $1\leq A\leq n$ \cite{abtaham-marsden}.
If $T_{q}Q$ denotes the tangent space of $Q$ at the point $q$, the tangent bundle $TQ$ is defined by $\displaystyle{TQ:=\sqcup_{q\in Q}T_{q}Q}$, with local coordinates $(q^A, \dot{q}^A)$.
The bundle projection $\tau_{Q}: TQ \rightarrow Q$, sends each vector $v_{q}$ to the corresponding base point $q$, in local coordinates $\tau_{Q}(q^{A},\dot{q}^{A})=q^{A}$. 

$\displaystyle{T^{*}Q:=\sqcup_{q\in Q}T^{*}_{q}Q}$, endowed with local coordinates $(q^A,p_A)$, indicates the cotangent bundle.
$\pi_{Q}:T^{*}Q \rightarrow Q$ denotes the canonical projection and sends each momentum $p_{q}$ to the corresponding base point $q$.
In local coordinates $\pi_{Q}(q^{A},p_{A})=q^{A}$.



We associate a system of $2n$ first order ordinary differential equations to every function $H: T^{*}Q\to\mathbb{R}$, namely the associated Hamilton's equations
\begin{equation}\label{hameq1}
\dot{q}^{A}=\frac{\partial H}{\partial p_A},\quad\dot{p}_{A}=-\frac{\partial H}{\partial q^A},\quad 1\leq A\leq n.
\end{equation} 
Their solutions are integral curves of the Hamiltonian vector field $X_H$ associated with $H$ that locally reads $X_H(q,p)=\left(q, p, \frac{\partial H}{\partial p},-\frac{\partial H}{\partial q}\right)$.



A symplectic 2-form $\omega$ on a manifold $Q$ is a $(0,2)$-type tensor field that is skew-symmetric and non-degenerate, i.e., $\omega(X,Y)=-\omega(Y,X)$ for all vector fields $X$ and $Y$ and if $\omega(X,Y)=0$ for all vector fields $X$, then $Y\equiv 0$.
A pair $(Q,\omega)$ is called a symplectic manifold if $Q$ is a manifold and $\omega$ is a symplectic 2-form.

The set of vector fields and the set of 1-forms on $Q$ are denoted by $\mathfrak{X}(Q)$ and $\Omega^{1}(Q)$, respectively.

As described in~\cite{libermann2012symplectic}, the cotangent bundle $T^*Q$ of a manifold $Q$ is equipped with a canonical exact symplectic structure $\omega_Q=-d\theta_Q$, where $\theta_Q$ is the canonical 1-form on $T^*Q$. In canonical bundle coordinates $(q^A, p_A)$ on $T^*Q$, 
$
\theta_Q= p_A\, \mathrm{d}q^A$ and
$\omega_Q= \mathrm{d}q^A\wedge \mathrm{d}p_A.
$

The Hamiltonian dynamics is characterized by the following two essential properties~\cite{libermann2012symplectic}:
\begin{itemize}
\item the Hamiltonian function, hence the state of the system, is preserved along the dynamics:
$\omega_Q(X_H, X_H)=dH(X_H)=X_H(H)=0$;
\item the symplectic form $\omega$ is preserved along the dynamics, that is, 
 if $\Phi^t_{X_H}$ denotes the flow of $X_H$, then the pull-back of the symplectic form $\omega_Q$ by the flow is preserved, i.e., $(\Phi^t_{X_H})^*\omega_Q=\omega_Q$.

\end{itemize}

\begin{defn}
Let $(Q_1,\omega_1)$ and
$(Q_2,\omega_{2})$ be two symplectic manifolds, let $\phi:Q_1\to Q_2$ be a map. The
map $\phi$ is called symplectic if the symplectic forms are preserved:
$
\phi^*\omega_2=\omega_1$. Moreover, it is a \textit{symplectomorphism} if additionally $\phi$ is a diffeomorphism.
\end{defn}

Let $Q_1$ and $Q_2$ be $n$-dimensional manifolds and $F: Q_1\rightarrow Q_2$ be a map.
The {\it tangent lift} $TF: TQ_1\rightarrow TQ_2$  of $F$ is defined by $TF(v_q)=T_qF (v_q)\in T_{F(q)} Q_2\, \, \mbox{ where } v_q\in T_qQ_1$, and $T_qF$ is the tangent map of $F$ whose matrix representation is the Jacobian matrix of $F$ at $q\in Q_1$.

As the tangent map $T_qF$ is linear, the dual map $T_{q}^*F\colon T^*_{F(q)}Q_2\rightarrow T^*_qQ_1$ is defined as follows:
\[\langle(T^*_{q}F)(\alpha_2), v_{q}\rangle=\langle \alpha_2, T_{q}F(v_{q})\rangle\mbox{ for every } v_q\in T_qQ_1.\]
Note that $(T^*_{q}F)(\alpha_2)\in T^*_qQ_1$. 
  
\begin{defn}\label{def:colift}
Let $F: Q_1\rightarrow Q_2$ be a diffeomorphism.
The vector bundle morphism $\widehat{F}: T^*Q_1\rightarrow T^*Q_2$ defined by	$\widehat{F}=T^*F^{-1}$ is called the cotangent lift of $F^{-1}$.
In other words, $\widehat{F}(\alpha_q)= 	T^*_{F(q)}F^{-1} (\alpha_q)$ where $\alpha_q\in T^*_q Q_1$.
Note that, $(T^*F^{-1})\circ (T^*F)={\rm Id}_{T^*Q_2}$.
\end{defn}

\subsection{Time-dependent Hamiltonian systems and cosymplectic geometry} 





To describe the dynamics of a non-autonomous Hamiltonian system, we consider the manifold $T^*Q \times \mathbb{R}$ equipped with 
$(\omega_0, \eta)$,
where 
$\omega_0=\hbox{pr}_1^*\omega_Q$ and $\eta=\hbox{pr}_2^* dt$.
Here $\hbox{pr}_1: T^*Q \times \mathbb{R} \rightarrow T^*Q$ and $\hbox{pr}_2: T^*Q \times \mathbb{R} \rightarrow \mathbb{R}$ denote the projections.

Note that the 2-form $\omega_0$ is \emph{presymplectic}, since its kernel is generated by $\partial_t$. 
This degeneracy reflects the fact that the time variable $t$ plays the role of an external parameter rather than a canonical coordinate. 
Consequently, the pair $(\omega_0, \eta)$ provides the canonical \emph{cosymplectic structure} on $T^*Q \times \mathbb{R}$, 
which constitutes the natural geometric framework for time-dependent Hamiltonian systems (see \cite{cappelletti} and references therein for details).

Since $\omega_0^n\wedge dt\not= 0$, then 
$(T^*Q \times \mathbb{R}, \omega_0, \eta)$ is a cosymplectic manifold. 
In Darboux coordinates $(q^i, t, p_i)$, we have that
\[
\omega_0=dq^i\wedge dp_i, \qquad \eta=dt.
\]

The cosymplectic structure defines a unique vector field $R$, the Reeb vector field, that satisfies
\[
i_R\omega_0=0, \qquad i_R \eta=1.
\]
In the particular case studied in this work, the Reeb vector field $R=\partial/\partial t=\partial_t$.


For any function $f\in C^{\infty}(T^*Q \times \mathbb{R})$ we define the {\it Hamiltonian} vector field $X_f$ by
\[
i_{X_f}\omega_0=df-R(f)\eta, \qquad i_{X_f}\eta=0,
\]
and the {\it evolution vector field} $E_f$ by $E_f=R+X_f$ or equivalently
\begin{equation*}\label{def:evolution}
i_{E_f}\omega_0=df-R(f)\eta, \qquad i_{E_f}\eta=1\;.
\end{equation*}
The Lie derivative of $\omega_0$ and $\eta$ along $E_f$ produces
\begin{equation*}
\mathcal{L}_{E_f}\omega_0=-d(R(f))\wedge \eta, \qquad \mathcal{L}_{E_f}\eta=0.
\end{equation*}

Locally, the evolution vector field of the Hamiltonian function
$H$ reads 
\[
E_H=R+X_H=\frac{\partial}{\partial t}+\frac{\partial H}{\partial p_A}\frac{\partial }{\partial q^A}-\frac{\partial H}{\partial q^A}\frac{\partial }{\partial p_A}.
\]
Its integral curves coincide with the trajectories defined by Hamilton’s equations
\[
\dot q=\partial_p H, \quad \dot t = 1, \quad \dot p=-\partial_q H.
\]

The cosymplectic structure $(\omega_0, \eta)$ induces a Poisson bracket $\{\cdot,\cdot\}_0$ on $T^*Q \times \mathbb{R}$ 
by
\[
\{f, g\}_0=\omega_0(X_f, X_g)=\omega_0(E_f, E_g)
\]
for all $f, g\in C^{\infty}(T^*Q \times \mathbb{R})$, that in Darboux coordinates is given by the bivector
\[
\Lambda_0=\frac{\partial}{\partial q^A}\wedge \frac{\partial}{\partial p_A}\; .
\]
Observe that $\mathcal{L}_{E_H}\Lambda_0=\mathcal{L}_{R}\Lambda_0+\mathcal{L}_{X_H}\Lambda_0=0$ since $\mathcal{L}_R\Lambda_0=0$.
The canonical volume form on $T^*Q \times \mathbb R$ $\mathrm{vol}_0=\omega_0^n\wedge \eta$ is preserved by the flow of $E_H$ as well.

An interesting way to represent the dynamics of the evolution vector field $E_H$ is to consider the modified cosymplectic structure  by taking the cosymplectic pair $(\Omega_H, dt)$ where 
\[
\Omega_H=\omega_0+dH\wedge \eta,\quad \eta=dt,
\]
with corresponding Reeb vector field precisely the evolution vector field:
\[
i_{E_H}\Omega_H=0,\quad i_{E_H}\eta=1. 
\]
Therefore, the flow of $E_H$ satisfies
\[
\mathcal{L}_{E_H}\Omega_H = 0,
\quad
\mathcal{L}_{E_H}dt = 0.
\]

We stress the fact that this property is difficult to preserve using numerical methods, since $\Omega_H$ depends on the Hamiltonian function $H$.
Therefore, we aim to obtain infinitesimal preservation properties in terms of canonical objects.

We also observe that the Evolution vector field $E_H$ preserves:
\begin{enumerate}
    \item the volume $\mathrm{vol}_0=\omega_0^n\wedge \eta$, that is $\mathcal{L}_{E_H}\mathrm{vol}_0=0$; 
    \item the Poisson structure $\Lambda$, that is $\mathcal{L}_{E_H}\Lambda_0=0$.
\end{enumerate}

Denote now by $
\Psi^H_s\colon U\subset T^*Q \times \mathbb{R}\rightarrow  T^*Q \times \mathbb{R}
$
the flow of the evolution vector field $E_H$, where $U$ is an open subset of $T^*Q \times \mathbb{R}$.
Observe that if $\alpha_q\in T^*_qQ$
\[
  \Psi^H_s(\alpha_q, t) = (\Psi^H_{t,s}(\alpha_q), t+s) \,,
\]
where $\Psi^H_{t,s}(\alpha_q)={pr}_1(\Psi^H_s(\alpha_q, t))$.
Therefore, we induce a map
\[
\Psi^H_{t,s}\colon U_t\subseteq T^*Q\to T^*Q\,,
\]
where $U_t=\{ \alpha_q\in T^*Q \;|\; (t, \alpha_q)\in U \}$. 
In terms of the flow of $E_H$ we have then that the flow of the evolution vector field $E_H$ preserves:
\begin{enumerate}
    \item volume: $(\Psi^H_s)^*\mathrm{vol}_0=
    \mathrm{vol}_0$;
    \item Poisson bracket: $\{f\circ \Psi^H_s, g\circ \Psi^H_s\}_0=\{f, g\}_0\circ \Psi^H_s$;
    \item Symplectic form: $(\Psi^H_{t,s})^*\omega_Q=\omega_Q$.
\end{enumerate}
These are the three related properties that are important to preserve when constructing appropriate geometric integrators.

\subsection{Generalized canonical transformations}

In order to ensure geometric preservations, we will use a symplectification method  to work in the standard symplectic framework (see \cite{asorey1983generalized} and references therein for details).
Given the Hamiltonian function $H:T^*Q \times \mathbb{R}\rightarrow \mathbb{R}$ we can construct the extended Hamiltonian 
$\tilde{H}:T^*(Q \times \mathbb{R})\rightarrow \mathbb{R}$ by taking 
\[
\tilde{H}=H\circ \mu+p_t
\]
where $\mu: T^*
(Q \times \mathbb{R})\rightarrow T^*Q \times \mathbb{R}$ is the projection
$\mu( \alpha_q; t, p_t)=( \alpha_q, t)$,
with $(t, p_t)\in T^*{\mathbb R}=\mathbb{R}\times {\mathbb R}^*$ and $\alpha_q\in T^*_q Q$.
In Darboux coordinates $(q^A,t,p_A,p_t)$ on $T^*(Q \times \mathbb{R})$,
the map $\mu$ reads
\[
\mu(q^A,t,p_A,p_t) = (q^A,p_A,t) \in T^*Q \times \mathbb{R}.
\]

$(T^*(Q \times \mathbb{R}), \omega_{Q \times \mathbb{R}})$  is a symplectic manifold with $\omega_{Q \times \mathbb{R}}$ the canonical symplectic form, that in Darboux coordinates reads
\[
\omega_{Q \times \mathbb{R}} = dq^A\wedge dp_A + dt\wedge dp_t.
\]

\begin{theorem}[~\cite{asorey1983generalized}]
The Hamiltonian vector field $X_{\tilde{H}}$ associated to $\tilde{H}$ is obtained as
\[
i_{X_{\tilde{H}}}\omega_{Q \times \mathbb{R}}=d\tilde{H}
\]
and is $\mu$-related to the evolution vector field, that is, $T\mu (X_{\tilde{H}})=E_H$.
\end{theorem}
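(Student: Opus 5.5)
The plan is to compute everything in the Darboux coordinates $(q^A,t,p_A,p_t)$ on $T^*(Q\times\mathbb{R})$ and to check $\mu$-relatedness pointwise. First, since $\omega_{Q\times\mathbb{R}}$ is nondegenerate, the equation $i_{X_{\tilde H}}\omega_{Q\times\mathbb{R}}=d\tilde H$ determines $X_{\tilde H}$ uniquely. I will write $X_{\tilde H}=a^A\partial_{q^A}+b\,\partial_t+c_A\partial_{p_A}+e\,\partial_{p_t}$ and contract it with $\omega_{Q\times\mathbb{R}}=dq^A\wedge dp_A+dt\wedge dp_t$, which gives
\[
i_{X_{\tilde H}}\omega_{Q\times\mathbb{R}}=a^A dp_A-c_A dq^A+b\,dp_t-e\,dt .
\]
Next, since $\tilde H=H\circ\mu+p_t$, we have
\[
d\tilde H=\frac{\partial H}{\partial q^A}dq^A+\frac{\partial H}{\partial p_A}dp_A+\frac{\partial H}{\partial t}dt+dp_t,
\]
where the partial derivatives of $H$ are evaluated at $\mu(q,t,p,p_t)$. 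Comparing coefficients yields $a^A=\partial H/\partial p_A$, $c_A=-\partial H/\partial q^A$, $b=1$ and $e=-\partial H/\partial t$. Hence
\[
X_{\tilde H}=\frac{\partial H}{\partial p_A}\partial_{q^A}+\partial_t-\frac{\partial H}{\partial q^A}\partial_{p_A}-\frac{\partial H}{\partial t}\partial_{p_t}.
\]

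To check $\mu$-relatedness, I use that $\mu(q,t,p,p_t)=(q,p,t)$ is a linear projection in these coordinates. Its tangent map sends $\partial_{q^A},\partial_t,\partial_{p_A}$ to the corresponding coordinate fields on $T^*Q\times\mathbb{R}$ and sends $\partial_{p_t}$ to zero. Therefore $T\mu(X_{\tilde H}(x))$ has components $(\partial H/\partial p_A,\,1,\,-\partial H/\partial q^A)$ at $\mu(x)$, which is exactly the local expression of $E_H=R+X_H$ given earlier. So $T\mu\circ X_{\tilde H}=E_H\circ\mu$.

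The coordinate computation is routine. The only point I expect to need care is globality: the result should not depend on the choice of Darboux chart. I would handle this by noting that both sides are intrinsically defined vector fields, and that canonical bundle coordinates induced from charts on $Q\times\mathbb{R}$ of the form (chart on $Q$)$\times t$ cover $T^*(Q\times\mathbb{R})$. Pointwise equality in each such chart then gives global equality. As an alternative intrinsic check, one could verify that $i_{T\mu X_{\tilde H}}\eta=1$ and $i_{T\mu X_{\tilde H}}\omega_0=dH-R(H)\eta$, using $\mu^*\omega_0=dq^A\wedge dp_A$, $\mu^*\eta=dt$, and the fact that the $dt\wedge dp_t$ part of $\omega_{Q\times\mathbb{R}}$ absorbs the $\partial_t H$ term. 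Uniqueness of the evolution vector field then forces $T\mu(X_{\tilde H})=E_H$.
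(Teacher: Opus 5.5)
Your proposal is correct and follows essentially the same route as the paper: write $X_{\tilde H}$ in Darboux coordinates, then observe that $T\mu$ kills the $-\frac{\partial H}{\partial t}\,\partial_{p_t}$ component, which leaves exactly the local expression of $E_H$. Your additional remarks on chart-independence, and the intrinsic check via $\omega_{Q\times\mathbb{R}}=\mu^*\omega_0+dt\wedge dp_t$ together with uniqueness of the evolution vector field, are sound supplements that the paper omits.
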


The proof is quite simple since the expression of $X_{\tilde{H}}$ in coordinates is: 
\begin{align*}
X_{\tilde{H}}&=\frac{\partial \tilde{H}}{\partial p_A}\frac{\partial}{\partial q^A}-\frac{\partial \tilde{H}}{\partial q^A}\frac{\partial}{\partial p_i}
+\frac{\partial \tilde{H}}{\partial p_t}\frac{\partial}{\partial t}
-\frac{\partial \tilde{H}}{\partial t}\frac{\partial}{\partial p_t}\\
&=\frac{\partial}{\partial t}+\frac{\partial ({H}\circ \mu)}{\partial p_A}\frac{\partial}{\partial q^A}-\frac{\partial (H\circ \mu)}{\partial q^A}\frac{\partial}{\partial p_A}
-\frac{\partial (H\circ \mu)}{\partial t}\frac{\partial}{\partial p_t}.
\end{align*}
In $T^*(Q \times \mathbb{R})$ the associated canonical bracket $\{\, ,\,\}$ 
is given by
\[
\{\tilde{f}, \tilde{g}\}=\omega_{Q \times \mathbb{R}}(X_{\tilde{f}}, X_{\tilde{g}})
\]
for all $\tilde{f}, \tilde{g}\in C^{\infty}(T^*(Q \times \mathbb{R}))$.
In Darboux coordinates is represented by the bivector
\[
\Lambda=\frac{\partial}{\partial q^A}\wedge \frac{\partial}{\partial p_A}+
\frac{\partial}{\partial t}\wedge \frac{\partial}{\partial p_t}
\; .
\]
Additionally, as an immediate consequence of the definition of the Poisson structures on $T^*(Q \times \mathbb{R})$ and $T^*Q \times \mathbb{R}$ it emerges that $\mu: T^*(Q \times \mathbb{R})\rightarrow T^*Q \times \mathbb{R}$  is a Poisson map, that is,
\[
\{\mu^*f, \mu^*g\}=\mu^*\{f, g\}_0
\]
for all $f, g\in C^{\infty}(T^*Q \times \mathbb{R})$.



The concept of canonical transformation does not apply directly to time-dependent mechanical systems in $T^*Q \times \mathbb{R}$, but a similar notion can be formulated considering the extended phase space.

\begin{defn} [~\cite{asorey1983generalized, marthinsen2016geometric}]
    A canonical transformation of a time-dependent system $(T^*Q \times \mathbb{R},\omega_0,\eta)$ is a pair $(\psi, \varphi)$ of diffeomorphisms, $\psi$ on $T^*(Q \times \mathbb{R})$ and $\varphi$ on $T^*Q \times \mathbb{R}$ such that
    \begin{enumerate}
        \item $\mu \circ \psi = \varphi \circ \mu$;
        \item $\psi$ is a symplectomorphism of $(T^*(Q \times \mathbb{R}), \omega_{Q \times \mathbb{R}})$.
    \end{enumerate}
\end{defn}
This definition requests that the diagram in Figure \ref{Diag: canonical transformation} commutes.
\begin{figure} [htb!]
 \begin{equation*}
\xymatrix{ 
{{ T^*(Q \times \mathbb{R}) }} \ar[rr]^{{{\psi}}} \ar[d]_{\mu} && \ar[d]_{\mu}
{{T^*(Q \times \mathbb{R}) }}  \\ 
T^*Q \times \mathbb{R} \ar[rr]^{\varphi}&& T^*Q \times \mathbb{R} }
\end{equation*}
    \caption{Scheme for canonical transformation.}\label{Diag: canonical transformation}
\end{figure}

It is important to notice that this definition does not depend on the Hamiltonian function $H$.

\begin{prop}\label{Propo4}
If $(\psi,\varphi)$ is a canonical transformation, then the map $\varphi$ preserves  the Poisson structure on $(T^*Q \times \mathbb{R},\omega_0,\eta)$, that is $\{f\circ\varphi,\,g\circ\varphi\}_0=\{f,g\}_0\circ\varphi$ for all $f,g\in C^{\infty}(T^*Q \times \mathbb{R})$.
\end{prop}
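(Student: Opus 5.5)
The argument combines three facts: $\psi$ preserves the canonical bracket on $T^*(Q\times\mathbb{R})$ because it is a symplectomorphism; $\mu$ is a Poisson map, as observed above; and $\mu$ is a surjective submersion, so functions on $T^*Q\times\mathbb{R}$ are determined by their pullbacks.

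\emph{Step 1.} Since $\psi$ is a symplectomorphism of $(T^*(Q\times\mathbb{R}),\omega_{Q\times\mathbb{R}})$, it preserves the associated Poisson bracket. For any $\tilde f$ we have $\psi^*(i_{X_{\tilde f\circ\psi}}\,\omega)$ related to $i_{X_{\tilde f}}\omega$, so $X_{\tilde f\circ\psi}=\psi^*X_{\tilde f}$. Consequently
\[
\{\tilde f\circ\psi,\tilde g\circ\psi\}=\{\tilde f,\tilde g\}\circ\psi
\]
for all $\tilde f,\tilde g\in C^\infty(T^*(Q\times\mathbb{R}))$. This is standard.

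\emph{Step 2.} Fix $f,g\in C^\infty(T^*Q\times\mathbb{R})$. We use $\mu\circ\psi=\varphi\circ\mu$, then the Poisson property of $\mu$, then Step~1, and finally the Poisson property of $\mu$ again:
\begin{align*}
\mu^*\{f\circ\varphi,g\circ\varphi\}_0
&=\{f\circ\varphi\circ\mu,\ g\circ\varphi\circ\mu\}
=\{(f\circ\mu)\circ\psi,\ (g\circ\mu)\circ\psi\}\\
&=\{f\circ\mu,\ g\circ\mu\}\circ\psi
=(\mu^*\{f,g\}_0)\circ\psi
=\{f,g\}_0\circ\mu\circ\psi
=(\{f,g\}_0\circ\varphi)\circ\mu .
\end{align*}
This shows $\mu^*\bigl(\{f\circ\varphi,g\circ\varphi\}_0\bigr)=\mu^*\bigl(\{f,g\}_0\circ\varphi\bigr)$.

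\emph{Step 3.} In coordinates, $\mu(q,t,p,p_t)=(q,p,t)$ simply forgets $p_t$, so $\mu$ is surjective. Hence $\mu^*$ is injective on functions, and the desired identity $\{f\circ\varphi,g\circ\varphi\}_0=\{f,g\}_0\circ\varphi$ follows.

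The only point needing care is that the Poisson-map property of $\mu$ is applied to the composed functions $f\circ\varphi$ and $g\circ\varphi$. This is legitimate because $\varphi$ is a diffeomorphism of $T^*Q\times\mathbb{R}$, so these are again smooth functions there. Beyond that, the argument is a direct diagram chase.
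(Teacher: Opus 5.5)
Your proof is correct and follows the paper's own argument. The paper uses the same chain of equalities: pull back by $\mu$, use the Poisson property of $\mu$, the intertwining relation $\mu\circ\psi=\varphi\circ\mu$ and the Poisson property of the symplectomorphism $\psi$, then conclude from surjectivity of $\mu$. Only your justification in Step 1 is loosely worded; the clean version is $i_{\psi^*X_{\tilde f}}\omega=\psi^*(i_{X_{\tilde f}}\omega)=d(\tilde f\circ\psi)$, where $\psi^*X:=T\psi^{-1}\circ X\circ\psi$.
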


\begin{proof}
Since $\mu$ is a Poisson morphism
\[
\mu^{*}\{f,g\}_0=\{\mu^{*}f,\mu^{*}g\},\qquad 
\forall f,g\in C^{\infty}(T^*Q \times \mathbb{R}),
\]
where $\{\cdot,\cdot\}$ is the canonical Poisson bracket on $T^*(Q \times \mathbb{R})$.  
Since $\psi$ is symplectic, it is also a Poisson map:
\[
\{\psi^{*}F,\psi^{*}G\}=\psi^{*}\{F,G\},\qquad \forall F,G\in C^{\infty}(T^*(Q \times \mathbb{R})).
\]
Then, for any $f,g$,
\[
\begin{aligned}
\mu^{*}\{f\circ\varphi,\,g\circ\varphi\}_0
&=\{\mu^{*}(f\circ\varphi),\,\mu^{*}(g\circ\varphi)\}
 =\{f\circ(\varphi\circ\mu),\,g\circ(\varphi\circ\mu)\}\\
&=\{f\circ(\mu\circ\psi),\,g\circ(\mu\circ\psi)\}
 =\{\psi^{*}(\mu^{*}f),\,\psi^{*}(\mu^{*}g)\}\\
&=\psi^{*}\{\mu^{*}f,\,\mu^{*}g\}
 =\psi^{*}(\mu^{*}\{f,g\}_0)
 =(\mu\circ\psi)^{*}\{f,g\}_0\\
& =\mu^{*}(\{f,g\}_0\circ\varphi).
\end{aligned}
\]
Because $\mu$ is surjective,
\[
\{f\circ\varphi,\,g\circ\varphi\}_0=\{f,g\}_0\circ\varphi.
\]
Thus $\varphi$ preserves the Poisson structure $\{\cdot,\cdot\}_0$.
\end{proof}

Now, consider $\tau_s(q,p,t,p_t)=(q,p,t,p_t+s)$ the natural translation by $s\in {\mathbb R}$ along the fibers.
Then we have the following proposition.

\begin{prop}\label{Prop5}
If $(\psi,\varphi)$ is a canonical transformation and if
$\psi\circ \tau_s=\tau_s\circ \psi$,
then the map $\varphi$ preserves 
the volume $\mathrm{vol}_0=\omega_0^{\,n}\wedge\eta$ in $(T^*Q \times \mathbb{R},\omega_0,\eta)$, that is, $\varphi^{*}(\mathrm{vol}_0)=\mathrm{vol}_0$.
%
%
\end{prop}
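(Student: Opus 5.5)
Our plan is to lift the volume question to $T^*(Q\times\mathbb{R})$, where $\psi$ preserves the Liouville volume $\Omega^{n+1}$, with $\Omega=\omega_{Q\times\mathbb{R}}$, and to use the translation invariance $\psi\circ\tau_s=\tau_s\circ\psi$ to split off the $p_t$ direction. The key observation is the identity
\[
\Omega^{n+1} = (n+1)\,\omega_0'^{\,n}\wedge dt\wedge dp_t=(n+1)\,\mu^*(\mathrm{vol}_0)\wedge dp_t,
\qquad \omega_0'=\mu^*\omega_0 ,
\]
which holds because $(dt\wedge dp_t)^2=0$ and $\mu^*\omega_0=dq^A\wedge dp_A$, $\mu^*\eta=dt$. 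So $\psi^*\Omega^{n+1}=\Omega^{n+1}$ gives
\[
\mu^*(\varphi^*\mathrm{vol}_0)\wedge \psi^*dp_t=\psi^*\mu^*\mathrm{vol}_0\wedge\psi^*dp_t=\mu^*\mathrm{vol}_0\wedge dp_t ,
\]
where we used $\mu\circ\psi=\varphi\circ\mu$.

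The main step is to control $\psi^*dp_t$. Differentiating $\psi\circ\tau_s=\tau_s\circ\psi$ at $s=0$ gives $\psi_*\partial_{p_t}=\partial_{p_t}$. Since $\mu\circ\psi=\varphi\circ\mu$, the components of $\psi$ other than $p_t$ are $\mu$-basic, and $\psi$ has the form
\[
\psi(q,p,t,p_t)=(\varphi(q,p,t),\,p_t+g(q,p,t)).
\]
Indeed, the $p_t$-component $\Psi_t$ satisfies $\Psi_t(x,p_t+s)=\Psi_t(x,p_t)+s$, so $\Psi_t=p_t+g(x)$ with $g$ a function on $T^*Q\times\mathbb{R}$. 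Hence $\psi^*dp_t=dp_t+\mu^*dg$. Because $\mu^*(\varphi^*\mathrm{vol}_0)$ is a top-degree form pulled back from the $(2n+1)$-dimensional base, we have $\mu^*(\varphi^*\mathrm{vol}_0)\wedge\mu^*dg=\mu^*(\varphi^*\mathrm{vol}_0\wedge dg)=0$. We therefore obtain $\mu^*(\varphi^*\mathrm{vol}_0)\wedge dp_t=\mu^*\mathrm{vol}_0\wedge dp_t$.

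To conclude, write $\varphi^*\mathrm{vol}_0=J\,\mathrm{vol}_0$ for a function $J$ on $T^*Q\times\mathbb{R}$. The previous equality then reads $(\mu^*J-1)\,\mu^*\mathrm{vol}_0\wedge dp_t=0$. Since $\mu^*\mathrm{vol}_0\wedge dp_t$ is a nonvanishing volume form, this forces $\mu^*J=1$, and surjectivity of $\mu$ (as in Proposition~\ref{Propo4}) gives $J=1$, i.e.\ $\varphi^*\mathrm{vol}_0=\mathrm{vol}_0$.

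The obstacle we expect is justifying the structural form of $\psi$, namely that $\psi^*dp_t-dp_t$ is $\mu$-basic. This follows from the translation equivariance argument above, but the equivariance is needed precisely here. Without it, $\psi^*dp_t$ could contain a factor $a\,dp_t$ with $a\neq 1$, which would rescale the volume by $a^{-1}$. Throughout, the coordinate computations can be done in Darboux charts, which are global in the $(t,p_t)$ factor, so the argument is intrinsic.
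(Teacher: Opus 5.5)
Your proof is correct and follows the same skeleton as the paper's: lift to $T^*(Q\times\mathbb{R})$, use that $\psi$ preserves the Liouville volume together with the $\tau_s$-equivariance, then descend through $\mu\circ\psi=\varphi\circ\mu$ and the injectivity of $\mu^*$. The one difference is that the paper only asserts $\psi^*(\mu^*\mathrm{vol}_0)=\mu^*\mathrm{vol}_0$, which is already equivalent to the conclusion since $\psi^*\mu^*=\mu^*\varphi^*$; you actually prove it via $\omega_{Q\times\mathbb{R}}^{\,n+1}=(n+1)\,\mu^*\mathrm{vol}_0\wedge dp_t$ and the normal form $\psi=(\varphi\circ\mu,\;p_t+g\circ\mu)$, so your argument supplies the step the paper leaves unjustified.
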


\begin{proof}

Denote by $\mathrm{Vol}=\omega_{Q \times \mathbb{R}}^{\,n+1}$ the Liouville volume on $T^*(Q \times \mathbb{R})$
and by $\mathrm{vol}_0=\omega_0^{\,n}\wedge\eta$ the canonical cosymplectic volume on $T^*Q \times \mathbb{R}$. 

Because $\psi$ is symplectic, it preserves the Liouville volume $\psi^{*}\mathrm{Vol}=\mathrm{Vol}$ but, furthermore, since it is a canonical transformation verifying the additional condition that 
$\psi\circ \tau_s=\tau_s\circ \psi$, then we have that  
$\psi^{*}(\mu^{*}\mathrm{vol}_0)=\mu^{*}\mathrm{vol}_0 $.

Using $\mu\circ\psi=\varphi\circ\mu$, we compute:
\[
\mu^{*}\big(\varphi^{*}\mathrm{vol}_0\big)
=(\varphi\circ\mu)^{*}\mathrm{vol}_0
=(\mu\circ\psi)^{*}\mathrm{vol}_0
=\psi^{*}(\mu^{*}\mathrm{vol}_0)
=\mu^{*}\mathrm{vol}_0.
\]
Since $\mu$ is a surjective submersion,
\[
\varphi^{*}\mathrm{vol}_0=\mathrm{vol}_0.
\]
Hence $\varphi$ preserves the volume form $\omega_0^{\,n}\wedge\eta$.

\end{proof}


\medskip


Observe that the flow of the Hamiltonian $\tilde{H}=H\circ \mu+p_t$ verifies that $(\tau_s)_* X_{\tilde{H}}=X_{\tilde{H}}$, therefore its flow commutes with the translation along the fibers. 
Now, our aim is to construct symplectic integrators that are canonical transformations and verify the commuting condition with respect to $\tau_s$ obtaining preservation of Poisson bracket and volume form by applying Propositions \ref{Propo4} and \ref{Prop5}.

\section{Discrete canonical transformations and structure-preserving integrators}\label{section: Discrete canonical transformations and structure-preserving integrators}

The first notion of retraction appearing in the literature can be found in~\cite{borsuk1947topology, borsuk1931symmetric} from a topological viewpoint. Later on, the notion of retraction map as defined below is used to obtain Newton's method on Riemannian manifolds~\cite{shub1986some,adler2002newton}.

\begin{defn}\label{def-RetractMap} A \textit{retraction map} on a manifold $Q$ is a smooth mapping $R$ from the tangent bundle $TQ$ onto $Q$. Let $R_q$ denote the restriction of $R$ to $T_qQ$, the following properties are satisfied:
\begin{enumerate}
	\item $R_q(0_q)=q$, where $0_q$ denotes the zero element of the vector space $T_qQ$.
	\item With the canonical identification $T_{0_q}T_qQ\simeq T_qQ$, $R_q$ satisfies \begin{equation}\label{eq-DefRetract-prop}
	DR_q(0_q)=T_{0_q}R_q={\rm Id}_{T_qQ},
	\end{equation}
	where ${\rm Id}_{T_qQ}$ denotes the identity mapping on $T_qQ$.
\end{enumerate}
\end{defn}

The condition~\eqref{eq-DefRetract-prop} is known as \textit{local rigidity condition} since, given $\xi\in T_qQ$,  the curve $\gamma_\xi(t)=R_q(t\xi)$ has $\xi$ as tangent vector at $q$, i.e. $\dot{\gamma}_\xi(t)= \langle DR_q(t\xi), \xi\rangle\; \hbox{ and, in consequence}, \dot{\gamma}_\xi(0)= {\rm Id}_{T_qQ}(\xi)=\xi$.

A typical example of a retraction map is the exponential map, $\hbox{exp}$, on Riemannian manifolds given for example in~\cite[Chapter 3.2]{do1992riemannian}. Therefore, the image of $\xi$ through the exponential map is a point on the Riemannian manifold $(Q,g)$ obtained by moving along a geodesic a length equal to the norm of $\xi$ starting with the velocity $\xi/\|\xi\|$, that is, $\hbox{exp}_q(\xi)=\sigma (\|\xi\|),$ where $\sigma$ is the unit speed geodesic such that $\sigma(0)=q$ and $\dot{\sigma}(0)=\xi/\|\xi\|$. 

Next, we define a generalization of the retraction map in Definition~\ref{def-RetractMap} that allows a discretization of the tangent bundle of the configuration manifold leading to the construction of numerical integrators as described in \cite{barbero2023retraction}. Given a point and a velocity, we obtain two nearby points that are not necessarily equal to the initial base point. 

\begin{defn} \label{def:DiscreteMap2} A map 
	$R_d\colon U\subset TQ\rightarrow Q\times Q$ given by \begin{equation*}
	R_d(q,v)=(R^1(q,v),R^2(q,v)),
	\end{equation*} 
	where  $U$ is an open neighborhood of the zero section $0_q$ of $TQ$, 
	defines a {\it discretization map on $Q$} if it satisfies 
	\begin{enumerate}
		\item $R_d(q,0)=(q,q)$,
		\item $T_{0_q}R^2_q-T_{0_q}R^1_q\colon T_{0_q}T_qQ\simeq T_qQ\rightarrow T_qQ$ is equal to the identity map on $T_qQ$ for any $q$ in $Q$, where $R^a_q$ denotes the restrictions of $R^a$, $a=1,2$, to $T_qQ$.
	\end{enumerate}
\end{defn}
Thus, the discretization map $R_d$ is a local diffeomorphism from some neighborhood of the zero section of $TQ$.

If $R^1(q,v)=q$, the two properties in Definition~\ref{def:DiscreteMap2} guarantee that the both properties in Definition~\ref{def-RetractMap} are satisfied by $R^2$. Thus, Definition~\ref{def:DiscreteMap2} generalizes Definition~\ref{def-RetractMap}. 

		

\begin{example} The mid-point rule on an Euclidean vector space can be recovered from the following discretization map:
$R_d(q,v)=\left( q-\dfrac{v}{2}, q+\dfrac{v}{2}\right).$

Examples of retraction maps on Euclidean vector spaces typically used in the literature for the construction of numerical methods (see \cite{iserles2009first} and \cite{hairer2006geometric} for instance), that can be used to define discretization maps are:
\begin{itemize}
	\item Explicit Euler method:  $R_d(q,v)=(q,q+v).$
	\item Midpoint rule:  $R_d(q,v)=\left( q-\dfrac{v}{2}, q+\dfrac{v}{2}\right).$
		\item $\theta$-methods: $R_d(q,v)=\left( q-\theta \, v, q+ (1-\theta)\, v\right),$ with $\theta\in [0,1]$.
\end{itemize}
\end{example}


\subsection{Cotangent lift of discretization maps}

As the Hamiltonian vector field takes value on $TT^*Q$, the discretization map must be on $T^*Q$, that is, 
$R^{T^*}_d: TT^*Q \rightarrow T^*Q\times T^*Q$. Such a map is obtained by cotangently lifting a discretization map $R_d\colon TQ\rightarrow Q\times Q$, so that the construction $R^{T^*}_d$ is a symplectomorphism. In order to do that, we need the following three symplectomorphisms (see \cite{barbero2023retraction} for more details): 
\begin{itemize}
\item The cotangent lift of the diffeomorphism $R_d\colon TQ\rightarrow Q\times Q$ as described in Definition~\ref{def:colift}. 
	\item The canonical symplectomorphism $\alpha_Q\colon TT^*Q  \longrightarrow  T^*TQ$  
	 such that\newline $\alpha_Q(q,p,v_q,v_p)= (q,v_q,v_p,p)$.

	\item  The symplectomorphism between $(T^*(Q\times Q), \omega_{Q\times Q})$     and   
	$(T^*Q\times T^*Q, \Omega_{12}:=pr_2^*\omega_Q-pr^*_1\omega_Q)$:
	$\Phi:T^*Q\times T^*Q \longrightarrow T^*(Q\times Q)$, given by $\Phi(q_0, p_0; q_1, p_1)=(q_0, q_1, -p_0, p_1).$
	\end{itemize}
Diagram in Figure~\ref{Diag:RdT*} summarizes the construction process from $R_d$ to $R_d^{T^*}$.
	\begin{figure} [htb!]
 \begin{equation*}
\xymatrix{ 
{TT^*M } \ar[rr]^{R_d^{T^*}} \ar[d]^{\alpha_{M}} && {T^*M\times T^*M } \ar[d]^{\Phi}
\\ 
T^*TM \ar[d]^{\pi_{TM}} \ar[rr]^{ \widehat{R_d}} && T^*(M\times M) \ar[d]^{\pi_{M\times M}}
\\ 
TM \ar[rr]^{R_d} && M\times M 
}
\end{equation*}
    \caption{Definition of the cotangent lift of a discretization.}\label{Diag:RdT*}
\end{figure}

\begin{prop} [see \cite{barbero2023retraction}]
	Let $R_d\colon TQ\rightarrow Q\times Q$ be a discretization map on $Q$. Then $${{R_d^{T^*}=\Phi^{-1}\circ \widehat{R_d}\circ \alpha_Q\colon TT^*Q\rightarrow T^*Q\times T^*Q}}$$ 
is a discretization map  on $T^*Q$.\label{Prop:RdT*}
\end{prop}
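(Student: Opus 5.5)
We need to check the two axioms of Definition~\ref{def:DiscreteMap2} for $R_d^{T^*}$ regarded as a map on the manifold $T^*Q$. The plan is to work in canonical coordinates. We first compute $\widehat{R_d}=T^*R_d^{-1}$ explicitly, then compose with $\alpha_Q$ and $\Phi^{-1}$. Write $R_d(q,v)=(R^1(q,v),R^2(q,v))$ and take a point $(q,p,v_q,v_p)\in TT^*Q$. Then $\alpha_Q$ sends it to the covector $(q,v_q;\,v_p,p)\in T^*TQ$, with $v_p$ paired against $\delta q$ and $p$ against $\delta v$. The cotangent lift produces a covector $(q_0,q_1;\,P_0,P_1)$ at $(q_0,q_1)=R_d(q,v_q)$. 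It is determined by the requirement that its pullback by $TR_d$ equal the given covector:
\[
(TR_d)^{\ast}(P_0,P_1)=(v_p,p),\qquad\text{i.e.}\qquad
\begin{pmatrix} v_p\\ p\end{pmatrix}=
\begin{pmatrix} D_qR^1 & D_qR^2\\ D_vR^1 & D_vR^2\end{pmatrix}^{\!\top}\!\!\begin{pmatrix}P_0\\P_1\end{pmatrix}.
\]
This is solvable near the zero section because $R_d$ is a local diffeomorphism there. Applying $\Phi^{-1}$ then gives
\[
R_d^{T^*}(q,p,v_q,v_p)=\bigl(R^1(q,v_q),-P_0;\;R^2(q,v_q),P_1\bigr).
\]

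\emph{First axiom.} Set $v_q=0,\ v_p=0$. Since $R_d(q,0)=(q,q)$, the map $q\mapsto R^a(q,0)$ is the identity, so $D_qR^1(q,0)=D_qR^2(q,0)=\mathrm{Id}$. Condition 2 of the definition gives $D_vR^2(q,0)-D_vR^1(q,0)=\mathrm{Id}$. The linear system therefore reads
\[
0=P_0+P_1,\qquad p=(D_vR^1)^{\top}P_0+(D_vR^2)^{\top}P_1=(D_vR^1)^{\top}P_0-(D_vR^2)^{\top}P_0 .
\]
From the first equation $P_1=-P_0$, and substituting into the second gives $p=-P_0$. Hence $-P_0=p$ and $P_1=p$, so $R_d^{T^*}(q,p,0,0)=((q,p),(q,p))$.

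\emph{Second axiom.} Differentiate along the fiber $T_{(q,p)}T^*Q$ at the origin, in the direction $(\delta v_q,\delta v_p)$. We need the difference of the $T^*Q$-components of the two factors to equal $(\delta v_q,\delta v_p)$. In the base variable this is immediate: $(D_vR^2-D_vR^1)\delta v_q=\delta v_q$. In the momentum variable we must show $\delta P_1-\delta(-P_0)=\delta P_0+\delta P_1=\delta v_p$. Differentiate the first row of the linear system, $v_p=(D_qR^1)^{\top}P_0+(D_qR^2)^{\top}P_1$, at $v=0$. This yields
\[
\delta v_p=\delta P_0+\delta P_1+\bigl[\delta(D_qR^1)^{\top}\bigr]P_0+\bigl[\delta(D_qR^2)^{\top}\bigr]P_1 .
\]
At the base point $P_1=-P_0=p$, so the last two terms combine to $\bigl[D_v D_q(R^2-R^1)\,\delta v_q\bigr]^{\top}p$. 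This term vanishes: $R^2-R^1$ has $v$-derivative equal to $\mathrm{Id}$ at $v=0$ for every $q$, so its mixed derivative $D_qD_v$ at $v=0$ is zero. By symmetry of second derivatives, $D_vD_q(R^2-R^1)=0$ as well. Hence $\delta P_0+\delta P_1=\delta v_p$, which is the identity condition.

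The main obstacle is the bookkeeping in this last step: keeping track of the sign convention in $\Phi$ and of which factor $P_0$ or $P_1$ belongs to. The point needing care is the cancellation of the second-derivative terms, which relies on condition 2 holding identically in $q$. If working intrinsically, one could instead use the fact that the base component already is the discretization $R_d$. The fiber part then follows from the first axiom together with the linearity of $T^*R_d^{-1}$ on fibers; I would keep the coordinate argument as the primary route.
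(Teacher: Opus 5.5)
Your proof is correct and follows essentially the argument behind the statement. The paper gives no proof of its own and simply cites \cite{barbero2023retraction}, where the result is obtained by essentially this local-coordinate check of the two axioms of Definition~\ref{def:DiscreteMap2}: solve $(TR_d)^{*}(P_0,P_1)=(v_p,p)$ and evaluate at the zero section. You also correctly isolate the only delicate point: the mixed terms $\bigl[D_vD_q(R^2-R^1)\,\delta v_q\bigr]^{\top}p$ vanish because condition 2 holds for every $q$, and this is what kills the off-diagonal block of $T_0(R_d^{T^*})^2-T_0(R_d^{T^*})^1$.

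One cosmetic slip: the matrix you transpose should be the Jacobian $\begin{pmatrix} D_qR^1 & D_vR^1\\ D_qR^2 & D_vR^2\end{pmatrix}$ of $R_d$. A full transpose of the block matrix you wrote would give $v_p=(D_qR^1)^{\top}P_0+(D_vR^1)^{\top}P_1$. The componentwise equations you actually use afterwards are nonetheless the right ones.
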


\begin{corollary} [see \cite{barbero2023retraction}]
The discretization map
 ${{R_d^{T^*}}} = \Phi^{-1}\circ (TR_d^{-1})^* \circ \alpha_Q\colon T(T^*Q) \rightarrow T^*Q\times T^*Q$ is a symplectomorphism between $(T(T^*Q), {\rm d}_T \omega_Q)$ and $(T^*Q\times T^*Q, \Omega_{12})$, where ${\rm d}_T \omega_Q = dq \wedge d\dot{p} + d\dot{q} \wedge dp$.
\end{corollary}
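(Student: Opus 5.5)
The plan is to show that each of the three maps in the composition $R_d^{T^*}=\Phi^{-1}\circ (TR_d^{-1})^*\circ \alpha_Q$ is a symplectomorphism between the indicated symplectic manifolds. Since a composition of symplectomorphisms is a symplectomorphism, this gives the result. Here $(TR_d^{-1})^*$ coincides with the cotangent lift $\widehat{R_d}=T^*R_d^{-1}$ of Definition~\ref{def:colift}. Throughout, $R_d$ is regarded as a diffeomorphism from a neighbourhood $U$ of the zero section onto its image in $Q\times Q$. This is legitimate because, by condition 2 of Definition~\ref{def:DiscreteMap2}, $T_{0_q}R_d$ is invertible, so $R_d$ is a local diffeomorphism near the zero section; after shrinking $U$, I will assume it is injective there. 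All statements are then understood on the corresponding open subsets.

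\textbf{Step 1: $\alpha_Q$.} I first show that $\alpha_Q\colon (TT^*Q,{\rm d}_T\omega_Q)\to (T^*TQ,\omega_{TQ})$ is symplectic. In coordinates $T^*TQ$ carries $(q,v;\,p_q,p_v)$ with $\omega_{TQ}=dq\wedge dp_q+dv\wedge dp_v$. The map $\alpha_Q(q,p,\dot q,\dot p)=(q,\dot q,\dot p,p)$ sets $v=\dot q$, $p_q=\dot p$, $p_v=p$. Pulling back gives $\alpha_Q^*\omega_{TQ}=dq\wedge d\dot p+d\dot q\wedge dp={\rm d}_T\omega_Q$. The identification is coordinate-independent (Tulczyjew), but the local check suffices here. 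Since $\alpha_Q$ is a bijection with smooth inverse, it is a symplectomorphism.

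\textbf{Step 2: $\widehat{R_d}$.} Next I use the standard fact that the cotangent lift of a diffeomorphism $F\colon M_1\to M_2$ preserves the Liouville forms, $\widehat{F}^*\theta_{M_2}=\theta_{M_1}$. This holds because for $\alpha\in T^*_xM_1$ and $w$ tangent to $T^*M_1$ we have $\langle T^*F^{-1}\alpha,\,T\pi_{M_2}(T\widehat F w)\rangle=\langle T^*F^{-1}\alpha,\,TF(T\pi_{M_1}w)\rangle=\langle\alpha,T\pi_{M_1}w\rangle$, using $\pi_{M_2}\circ\widehat F=F\circ\pi_{M_1}$. Taking $-d$ shows that $\widehat{R_d}\colon (T^*TQ,\omega_{TQ})\to(T^*(Q\times Q),\omega_{Q\times Q})$ is a symplectomorphism, with inverse the cotangent lift of $R_d^{-1}$.

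\textbf{Step 3: $\Phi$, and conclusion.} For $\Phi(q_0,p_0;q_1,p_1)=(q_0,q_1,-p_0,p_1)$ I compute $\Phi^*\omega_{Q\times Q}=dq_0\wedge d(-p_0)+dq_1\wedge dp_1=pr_2^*\omega_Q-pr_1^*\omega_Q=\Omega_{12}$. Hence $\Phi^{-1}$ is a symplectomorphism from $(T^*(Q\times Q),\omega_{Q\times Q})$ to $(T^*Q\times T^*Q,\Omega_{12})$. Composing Steps 1–3 yields $(R_d^{T^*})^*\Omega_{12}={\rm d}_T\omega_Q$, with $R_d^{T^*}$ a diffeomorphism onto its image. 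That $R_d^{T^*}$ is also a discretization map is Proposition~\ref{Prop:RdT*}.

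The only delicate point is the domain issue in Step 2: condition 2 of Definition~\ref{def:DiscreteMap2} gives only a local diffeomorphism. I would handle this by restricting to a neighbourhood of the zero section on which $R_d$ is injective and stating the symplectomorphism property there. The sign conventions for $\omega_Q$ and $\theta_Q$ must be kept consistent, but Step 2 is convention-free since it goes through $\theta$.
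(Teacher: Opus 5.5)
Your proposal is correct and follows the same route as the paper, which (deferring to the cited reference) obtains the result by observing that $\alpha_Q$, the cotangent lift $\widehat{R_d}$, and $\Phi^{-1}$ are each symplectomorphisms, so their composition is one. Your explicit coordinate checks for $\alpha_Q$ and $\Phi$, the Liouville-form argument for $\widehat{R_d}$, and the remark about restricting to a neighbourhood of the zero section on which $R_d$ is injective supply details that the paper leaves to the reference.
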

\begin{example}\label{example3} On $Q={\mathbb R}^n$ the discretization map 
	$R_d(q,v)=\left(q-\frac{1}{2}v, q+\frac{1}{2}v\right)$ is cotangently lifted to
		$$R_d^{T^*}(q,p,\dot{q},\dot{p})=\left( q-\dfrac{1}{2}\,\dot{q}, p-\dfrac{\dot{p}}{2}; \; q+\dfrac{1}{2}\, \dot{q}, p+\dfrac{\dot{p}}{2}\right)\, .$$
\end{example}

\subsection{Geometric integrators for non-autonomous systems}\label{section: Geometric integrators for non-autonomous systems}

The aim of this section is to extend the construction of geometric integrators to the non-autonomous Hamiltonian setting.  
Our goal is to construct a discrete analogue of the continuous Hamiltonian flow that preserves the same geometric structures - namely, a \emph{canonical transformation} on the extended phase space and its projection onto the cosymplectic manifold $T^*Q \times \mathbb{R}$.

Given a smooth manifold $M$, consider a discretization map $R_d : TM \longrightarrow M\times M$, which associates to each tangent vector a discrete displacement between two nearby configurations.  
The differential of $R_d$ allows us to define its cotangent lift $\widehat{R}_d:T^*TM\longrightarrow T^*(M\times M)$, which provides the natural bridge between discrete Lagrangian and Hamiltonian formulations.

A key ingredient in the construction is the canonical \emph{anti-symplectomorphism}
\[
{\mathcal I}_{TM}:T^*T^*M \longrightarrow T^*TM,
\]
which establishes a natural correspondence between the symplectic manifolds $(T^*T^*M,\omega_{T^*M})$ and $(T^*TM,\omega_{TM})$.  
Locally, this map is given by
\[
{\mathcal I}_{TM}(q,p,\mu_q,\mu_p) = (q,\mu_p,-\mu_q,p),
\]
and satisfies ${\mathcal I}_{TM}^*\omega_{TM} = -\omega_{T^*M}$, hence the name “anti-symplectomorphism”.  
This identification allows us to transfer the symplectic structure between tangent and cotangent bundles, which is essential for defining discrete flows that are consistent with the underlying continuous geometry.

\medskip
The overall structure of the discrete construction is summarized in the commutative diagram of Figure~\ref{Diag:RdT*2}.  
Each horizontal layer corresponds to a different geometric level:  
the bottom layer represents the discrete flow on the configuration manifold $M$,  
the middle layer its tangent lift,  
and the top layer the corresponding lift to the cotangent bundle,  
where the discrete Hamiltonian evolution is defined as a Lagrangian submanifold of $T^*T^*M$.

\begin{figure}[htb!]
\begin{equation*}
\xymatrix{ 
&& {TT^*M } \ar[rr]^{R_d^{T^*}} \ar[d]^{\alpha_{M}} && {T^*M\times T^*M } \ar[d]^{\Phi}
\\ 
T^*T^*M \ar[rr]^{ {\mathcal I}_{TM}} && T^*TM \ar[d]^{\pi_{TM}} \ar[rr]^{ \widehat{R_d}} && T^*(M\times M) \ar[d]^{\pi_{M\times M}}
\\ 
&& TM \ar[rr]^{R_d} && M\times M 
}
\end{equation*}
\caption{Discrete geometric construction based on the discretization map $R_d$.}
\label{Diag:RdT*2}
\end{figure}

\medskip
For non-autonomous systems, we take $M = Q \times \mathbb{R}$ and construct the discretization map $R^M_d$ as the product of two simpler maps:
\[
R^M_d = (R_d^{\mathbb R}, R_d^Q),
\qquad
R_d^{\mathbb R} : T^*\mathbb R \to \mathbb R\times\mathbb R, 
\quad
R_d^Q : T^*Q \to Q\times Q.
\]
This decomposition allows us to encode the evolution of both the configuration variables and the time variable within a unified discrete geometric framework.  
This composition, together with the symplectic identifications described above, leads to a Lagrangian submanifold in $T^*T^*(Q \times \mathbb{R})$ that implicitly defines the discrete evolution map on the extended phase space.  
The next result shows that the discrete flow constructed through this geometric framework defines a canonical transformation on $T^*(Q \times \mathbb{R})$ whose projection onto $T^*Q \times \mathbb{R}$ preserves the geometric properties (volume preservation, Poisson bracket...) associated to the continuous cosymplectic flow.

\begin{thm}\label{mainth}
Given the extended Hamiltonian $\tilde{H}=H\circ \mu+p_t$ then the Lagrangian submanifold 
$
(\Phi^{-1} \circ \widehat{R}^M_d\circ {\mathcal I}_{T(Q \times \mathbb{R})})(h\hbox{d}\tilde{H}(T^*(Q \times \mathbb{R})))
$
implicitly defines a map $\Psi_h:T^*(Q \times \mathbb{R})\rightarrow T^*(Q \times \mathbb{R}) $ which verifies that 
$\Psi_h$ is $\mu$-projectable to a map $\phi_h:T^*Q \times \mathbb{R}\rightarrow T^*Q \times \mathbb{R}$. 
Therefore, $(\Psi_h, \phi_h)$ is a canonical transformation.
\end{thm}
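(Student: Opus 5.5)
The plan is to work in local Darboux coordinates $(q,t,p,p_t)$ on $T^*(Q\times\mathbb{R})$ and write the Lagrangian submanifold explicitly. This yields implicit equations for $\Psi_h$. From them I will establish, in order: symplecticity, the form of the time and momentum updates, and $\mu$-projectability.

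\emph{Step 1: symplecticity.} The image $h\,d\tilde H(T^*(Q\times\mathbb{R}))$ is a Lagrangian submanifold of $T^*T^*(Q\times\mathbb{R})$. The map ${\mathcal I}_{T(Q\times\mathbb{R})}$ is an anti-symplectomorphism, while $\widehat{R}^M_d$ is the cotangent lift of a local diffeomorphism and hence a symplectomorphism. The map $\Phi^{-1}$ is a symplectomorphism onto $(T^*M\times T^*M,\Omega_{12})$. Therefore the image is Lagrangian in $(T^*M\times T^*M,\Omega_{12})$, since Lagrangian submanifolds are preserved under both symplectic and anti-symplectic maps.

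For $h$ small, the image is a graph over the first factor, by the rigidity condition $T R^2-TR^1=\mathrm{Id}$ and the implicit function theorem. So it is the graph of a local diffeomorphism $\Psi_h$. A graph that is Lagrangian for $pr_2^*\omega-pr_1^*\omega$ is exactly the graph of a symplectic map. This gives condition 2 of the definition of canonical transformation.

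\emph{Step 2: explicit equations.} Because $R^M_d=(R^{\mathbb R}_d,R^Q_d)$ is a product, the cotangent lift and the identifications $\Phi$, $\alpha$, ${\mathcal I}$ all split factorwise. Hence the implicit scheme splits into a $Q$-block and an $\mathbb{R}$-block. Write $R^{\mathbb R}_d(t,v)=(t+a(v),t+b(v))$ for a general discretization on $\mathbb{R}$ (translation invariant, as in the $\theta$-method examples), or more generally with $t$-dependence. The $\mathbb{R}$-block then reads as follows:
\begin{itemize}
\item the time update is $t_{k+1}-t_k=h\,\partial\tilde H/\partial p_t=h$, since $\partial\tilde H/\partial p_t=1$;
\item the $p_t$ update is $(p_t)_{k+1}-(p_t)_k=-h\,\partial (H\circ\mu)/\partial t$, evaluated at a point determined by $R_d$.
\end{itemize}
The $Q$-block involves $\partial H/\partial q$ and $\partial H/\partial p$, evaluated at intermediate points whose coordinates are built from $(q_k,p_k,q_{k+1},p_{k+1})$ and $t_k,t_{k+1}$ via $R_d$.

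The crucial observation is that $p_t$ never enters these equations. This holds because $H\circ\mu$ is independent of $p_t$, and $\tilde H$ depends on $p_t$ only linearly with constant coefficient. Note that $\tilde H$ is affine in $p_t$, so its derivatives in $p_t$ are constant, and the intermediate evaluation point of $p_t$ is irrelevant.

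\emph{Step 3: projectability.} From Step 2, $(q_{k+1},p_{k+1},t_{k+1})$ is determined by $(q_k,p_k,t_k)$ alone, solving the $Q$-block together with $t_{k+1}=t_k+h$. This defines $\phi_h$ with $\mu\circ\Psi_h=\phi_h\circ\mu$. The map $\phi_h$ is a diffeomorphism: running the same implicit argument backwards gives the inverse, which is again $\mu$-projectable. Combined with Step 1, the pair $(\Psi_h,\phi_h)$ is a canonical transformation.

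As a byproduct, $p_t$ enters the update only additively, so $\Psi_h\circ\tau_s=\tau_s\circ\Psi_h$. This makes Proposition \ref{Prop5} applicable as well as Proposition \ref{Propo4}.

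\emph{Main obstacle.} I expect the main obstacle to be Step 2 carried out intrinsically. One must check that the chain $\Phi^{-1}\circ\widehat R^M_d\circ{\mathcal I}$ really decouples into factors, and that $p_t$ does not sneak into the evaluation points of $\partial H/\partial q$, $\partial H/\partial p$ and $\partial H/\partial t$. This requires that the cotangent lift only mixes momenta linearly through $(TR_d)^{-1}$, which acts on the base point $(t,v)$ and not on $p_t$.

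I would verify this first in coordinates for the product structure, using the formula in Example \ref{example3} as a model. Then I would argue globally that $\tilde H$ is invariant under $\tau_s$ up to the constant $s$. Since $d\tilde H$ is $\tau_s$-invariant and all the maps in the chain commute with the induced fiber translations, projectability follows.
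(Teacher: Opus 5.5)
Your proposal is correct and follows essentially the paper's route: use the product structure $R^M_d=R^{\mathbb R}_d\times R^Q_d$ to write $\Lambda_h$ in coordinates, and observe that $p_t$ never enters the $(q,t,p)$-updates, because $H\circ\mu$ does not depend on $p_t$ and $\partial\tilde H/\partial p_t=1$; hence $\mu\circ\Psi_h$ factors through $\mu$. You are more careful than the paper in three places: you justify that $\Lambda_h$ is a graph for small $h$, you check that $\phi_h$ is a diffeomorphism, and you restrict the additive form $(p_t)_{k+1}=(p_t)_k+F$ (hence $\tau_s$-equivariance) to translation-invariant $R^{\mathbb R}_d$, whereas the paper asserts it unconditionally even though it can fail for $t$-dependent maps (e.g.\ $R^{\mathbb R}_d(t,v)=(t+ctv^2,\,t+v)$ gives $(p_t)_{k+1}=(1+ch^2)(p_t)_k+F$); one small correction is that even in the translation-invariant case $t_{k+1}-t_k=b(h)-a(h)$, which equals $h$ only when $b(v)-a(v)=v$, as for the $\theta$-methods.
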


\textit{Proof:} Let $M = Q \times \mathbb R$ and consider the extended Hamiltonian 
\[
\tilde H = H\circ\mu + p_t,
\qquad
\mu(q,t,p_q,p_t) = (q, p_q, t).
\]
We show that the submanifold
\[
\Lambda_h
=
(\Phi^{-1}\circ \widehat R_d^M \circ \mathcal I_{TM})(h\,d\tilde H)
\subset T^*M \times T^*M
\]
is Lagrangian, and that it implicitly defines a symplectic map 
\(\Psi_h:T^*M\to T^*M\) that is $\mu$–projectable.

In local coordinates $(q,t,p_q,p_t)$ on $T^*M$, the differential of $\tilde H$ is
\[
d\tilde H
=
\left(
\frac{\partial H}{\partial q},
\frac{\partial H}{\partial t},
\frac{\partial H}{\partial p_q},
1
\right).
\]
Multiplying by $h$ yields the discrete analogue of the Hamiltonian 1–form:
\[
h\,d\tilde H
=
\bigl(
h\partial_q H,\;
h\partial_t H,\;
h\partial_{p_q}H,\;
h
\bigr).
\]
This is the cotangent vector that encodes a single time step of the discretized dynamics.

The canonical antisymplectomorphism $\mathcal I_{TM}:T^*T^*M \longrightarrow T^*TM$ is locally given by
\[
\mathcal I_{TM}(x,p;\mu,\tilde{\mu})
=
(x,\tilde{\mu};-\mu,p)
\]
and satisfies $\mathcal I_{TM}^*\omega_{TM}=-\omega_{T^*M}$.

Applying $\mathcal I_{TM}$ to $h\,d\tilde H$ produces an element in $T^*TM$:
\[
\mathcal I_{TM}(h\,d\tilde H)
=
\bigl(
q,\;t,\; h\partial_{p_q}H,\; h;\;
-h\partial_q H,\; -h\partial_t H,\;
p_q,\; p_t
\bigr).
\]

Let $R_d = R_d^{\mathbb R}\times R_d^Q : TM \to M\times M$. Its cotangent lift decomposes as $\widehat R_d^M = \widehat R_d^{\mathbb R} \times \widehat R_d^Q$. In local coordinates $(q,t,\dot q,\dot t)$,
\[
R_d(q,t,\dot q,\dot t)
=
\bigl(
(R_d^{Q})^1(q,\dot q),\; (R_d^{\mathbb R})^1(t,\dot t),\;
(R_d^{Q})^2(q,\dot q),\; (R_d^{\mathbb R})^2(t,\dot t)
\bigr),
\]
and its cotangent lift transports momenta via \((T R_d)^{-1}\).

Applying $\widehat R_d^M$ to $\mathcal I_{TM}(h\,d\tilde H)$ gives a point in 
\( T^*(M\times M) \). The map
$\Phi:T^*M\times T^*M\longrightarrow T^*(M\times M)$
is a symplectomorphism whose inverse reorganizes cotangent coordinates to produce pairs $(x,p_x;\; x',p_{x'})$.

Thus
\[
\Lambda_h
=
(\Phi^{-1}\circ\widehat R_d^M\circ\mathcal I_{TM})
(h\,d\tilde H)
\subset T^*M\times T^*M.
\]

Since the 1-form $h\,d\tilde H$ is exact and both $\mathcal I_{TM}$ and $\Phi$ are symplectic or antisymplectic, the image $\Lambda_h$ of $(\Phi^{-1}\circ\widehat R_d^M\circ\mathcal I_{TM})$ is by construction a Lagrangian submanifold of the product symplectic manifold $T^*M\times T^*M$.
It therefore encodes a symplectic map $\Psi_h:T^*M\to T^*M$.

We must now show that:
\[
\mu\circ\Psi_h = \phi_h\circ\mu
\quad\text{for some map }\phi_h:T^*Q \times \mathbb{R}\to T^*Q \times \mathbb{R}.
\]

To show that $\Psi_h$ is $\mu$–projectable, we first investigate the explicit coordinate expression of the Lagrangian submanifold $\Lambda_h$. Using the coordinate formulas for $\widehat R_d^M$ and $\Phi^{-1}$ obtained earlier, we have
\[
\begin{split}
(\Phi^{-1}\circ \widehat{R^M_d})(t, q, \dot t, \dot q; \, & p_t, p_q, p_{\dot t}, p_{\dot q})\\
&=
\begin{pmatrix}
 (R^{\mathbb R}_d)^1(t, \dot t),\;
 -(\mathrm{pr}^{T^*\mathbb R}_1\circ T^*_{(t, \dot t)}{(R^{\mathbb R}_d)^{-1}})(p_t, p_{\dot t})\\[0.25em]
 (R^{Q}_d)^1(q, \dot q),\;
 -(\mathrm{pr}^{T^*Q}_1\circ T^*_{(q, \dot q)}{(R^{Q}_d)^{-1}})(p_q, p_{\dot q})\\[0.25em]
 (R^{\mathbb R}_d)^2(t, \dot t),\;
 (\mathrm{pr}^{T^*\mathbb R}_2\circ T^*_{(t, \dot t)}{(R^{\mathbb R}_d)^{-1}})(p_t, p_{\dot t})\\[0.25em]
 (R^{Q}_d)^2(q, \dot q),\;
 (\mathrm{pr}^{T^*Q}_2\circ T^*_{(q, \dot q)}{(R^{Q}_d)^{-1}})(p_q, p_{\dot q})
\end{pmatrix}.
\end{split}
\]
\color{black}

Substituting the value of ${\mathcal I}_{T(Q \times \mathbb{R})}(h\,d\tilde H)$, namely
\[
(t,q,\; \dot t,\dot q;\; p_t,p_q,p_{\dot t},p_{\dot q})
=
\bigl(
t,q,\;
h,\; h\partial_{p_q}H;\;
-h\partial_tH,\; -h\partial_qH,\;
p_t,\; p_q
\bigr),
\]
\color{black}
we obtain the explicit coordinate expression of $\Lambda_h$:
\[
\Lambda_h=
\begin{pmatrix}
 (R^{\mathbb R}_d)^1(t,h),\;
 -(\mathrm{pr}^{T^*\mathbb R}_1\circ T^*_{(t,h)}(R^{\mathbb R}_d)^{-1})
 \bigl(-h\partial_t H,\; p_t \bigr)
 \\[0.4em]
 (R^{Q}_d)^1(q,h\partial_{p_q}H),\;
 -(\mathrm{pr}^{T^*Q}_1\circ 
 T^*_{(q,h\partial_{p_q}H)}(R^{Q}_d)^{-1})
 \bigl(-h\partial_q H,\; p_q \bigr)
 \\[0.4em]
 (R^{\mathbb R}_d)^2(t,h),\;
 (\mathrm{pr}^{T^*\mathbb R}_2\circ 
 T^*_{(t,h)}(R^{\mathbb R}_d)^{-1})
 \bigl(-h\partial_t H,\; p_t \bigr)
 \\[0.4em]
 (R^{Q}_d)^2(q,h\partial_{p_q}H),\;
 (\mathrm{pr}^{T^*Q}_2\circ 
 T^*_{(q,h\partial_{p_q}H)}(R^{Q}_d)^{-1})
 \bigl(-h\partial_q H,\; p_q \bigr)
\end{pmatrix}.
\]
\color{black}

From these expressions we see that the update of the configuration variables \(q\) and the momentum \(p_q\)  
depends on \(q,p_q,t\) and the Hamiltonian derivatives,  
\emph{but no term involving \(p_t\) appears in the \(Q\)-component updates}.  
In particular,
\[
q_{k+1}
= (R_d^Q)^2\bigl(q,\; h\partial_{p_q}H\bigr),
\qquad
p_{q,k+1}
= (\mathrm{pr}^{T^*Q}_2\circ T^*R_d^Q)(-h\partial_q H,\; p_q),
\]
contains no $p_t$. In addition, the variable \(p_t\) appears \emph{only} inside the cotangent components of the 
\(\mathbb R\)-part, i.e., only in the expressions updating \(p_{t,k}\) and \(p_{t,k+1}\).  
Thus, the dependence of the discrete dynamics on \(p_t\) is vertical and does not couple into the $Q$–sector.

Hence, the full discrete map necessarily factorizes as
\[
\Psi_h(q,t,p_q,p_t)
=
\bigl(
\phi_h(q,t,p_q),\;
p_t + F(q,t,h,p_q)
\bigr),
\]
for some smooth function $F$ that does not depend on $p_t$.

Applying $\mu$ to both sides,
\[
\mu\circ\Psi_h(q,t,p_q,p_t)
= 
\phi_h(q,t,p_q).
\]
Thus $\Psi_h$ is $\mu$-projectable.\hfill$\square$


As a concrete example, consider $M=\mathbb R^n$ and apply the midpoint discretization map
\[
R_d^M(q,t,\dot q,\dot t)
=
\bigl(q-\tfrac{\dot q}{2}\,;\;t-\tfrac{\dot t}{2},\;
      q+\tfrac{\dot q}{2}\,;\; t+\tfrac{\dot t}{2} \bigr),
\]
which corresponds to evaluating the discrete step at the midpoint of the segment determined by $(q,t)$ and the tangent vector $(\dot q,\dot t)$.  
Substituting into the general construction of Theorem~\ref{mainth}, the Lagrangian submanifold 
$\Lambda_h$ yields the following implicit discrete update for the extended Hamiltonian system.

Indeed, since
\[
\dot t = h,\qquad
\dot q = h\,\frac{\partial H}{\partial p}(q,t,p),
\]
and the cotangent components are transformed by the pullback of $(R_d^M)^{-1}$, 
the coordinate expression of $\Lambda_h$ leads to the midpoint evaluation
\[
(q_{k+\frac12}, t_{k+\frac12}, p_{k+\frac12})
=
\Bigl(
\tfrac{q_k+q_{k+1}}{2},\;
\tfrac{t_k+t_{k+1}}{2},\;
\tfrac{p_k+p_{k+1}}{2}
\Bigr).
\]
Hence the discrete Hamilton equations produced by the geometric construction read:
\begin{align*}
\frac{q_{k+1}-q_k}{t_{k+1}-t_k}
&=
\frac{\partial H}{\partial p}\bigl(
q_{k+\frac12},\; t_{k+\frac12},\; p_{k+\frac12}
\bigr),\\[0.5em]
\frac{p_{k+1}-p_k}{t_{k+1}-t_k}
&=
-\frac{\partial H}{\partial q}\bigl(
q_{k+\frac12},\; t_{k+\frac12},\; p_{k+\frac12}
\bigr),
\end{align*}
which is precisely the \textit{implicit midpoint rule} for the non-autonomous Hamiltonian system.

Because this method arises from a Lagrangian submanifold defined through a discrete canonical transformation on the extended phase space $T^*(Q \times \mathbb{R})$, it automatically inherits the geometric preservation properties established in the next corollary:  
volume preservation on $T^*Q \times \mathbb{R}$, preservation of the Poisson bracket $\{\cdot,\cdot\}_0$, and symplecticity on each time slice $T^*Q \times \{t\}$. This is reflected with the following corollary.

\begin{cor}
The numerical method $\phi_h$ satisfies the following properties: 
\begin{enumerate}
\item it preserves the volume form $\mathrm{Vol}_0=\omega_0^{\,n}\wedge\eta$;
\item it preserves the Poisson bracket $\{\cdot,\cdot\}_0$ on $T^*Q \times \mathbb{R}$;
\item for each fixed $t\in\mathbb R$, the induced map $\phi_{t,h}:T^*Q\to T^*Q$ is a symplectomorphism. 
\end{enumerate}
\end{cor}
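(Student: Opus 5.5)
The plan is to obtain the three properties from Theorem~\ref{mainth} together with Propositions~\ref{Propo4} and~\ref{Prop5}, plus one extra observation about how $\phi_h$ acts on time.

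\emph{Poisson bracket.} By Theorem~\ref{mainth}, $(\Psi_h,\phi_h)$ is a canonical transformation. Proposition~\ref{Propo4} then gives directly that $\phi_h$ preserves the bracket $\{\cdot,\cdot\}_0$. This is item 2.

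\emph{Volume.} For item 1 we use Proposition~\ref{Prop5}. We must check that $\Psi_h\circ\tau_s=\tau_s\circ\Psi_h$. The proof of Theorem~\ref{mainth} shows that
\[
\Psi_h(q,t,p_q,p_t)=\bigl(\phi_h(q,t,p_q),\,p_t+F(q,t,h,p_q)\bigr),
\]
with $F$ independent of $p_t$. Translating $p_t\mapsto p_t+s$ therefore shifts the output momentum by $s$ and leaves the other components unchanged, so the two maps commute.

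The delicate point is that $\Lambda_h$ only defines $\Psi_h$ implicitly. For the $\mathbb R$-factor the relation is linear in $(p_{t,k},p_{t,k+1})$, since it is obtained by applying $T^*(R_d^{\mathbb R})^{-1}$ to $(-h\partial_tH,\,p_t)$. I would check this for a general discretization map on $\mathbb R$ with $\dot t=h$. The key fact is that the sum of the two components of $T^*(R^{\mathbb R}_d)^{-1}$ applied to $(a,b)$ equals $a$. This follows from condition 2 of Definition~\ref{def:DiscreteMap2}, which gives $\partial_t R^1=\partial_t R^2=1$ for a translation-invariant $R_d^{\mathbb R}$. As a result, $p_{t,k+1}-p_{t,k}=-h\,\partial_t H(\cdots)$, which depends only on the $(q,p_q,t)$ data. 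Once commutation holds, Proposition~\ref{Prop5} gives $\phi_h^*\mathrm{vol}_0=\mathrm{vol}_0$.

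\emph{Symplecticity on each slice.} For item 3, first note that the $t$-update is $t_{k+1}=(R^{\mathbb R}_d)^2(t,h)$, with $t_k=(R^{\mathbb R}_d)^1(t,h)$. It depends only on $t$ and $h$ (for the standard choices, $t_{k+1}=t_k+h$). Hence $\phi_h$ has the form
\[
\phi_h(\alpha,t)=\bigl(\phi_{t,h}(\alpha),\,t+h\bigr),
\]
so $\phi_h^*\eta=\eta$. Preservation of $\{\cdot,\cdot\}_0$ means $\phi_h$ maps $\Lambda_0$ to itself. Restricted to the slice $T^*Q\times\{t\}$, the bracket $\Lambda_0$ is the nondegenerate bracket dual to $\omega_Q$, and $\phi_h$ maps this slice diffeomorphically onto $T^*Q\times\{t+h\}$. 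Choosing $f,g$ to be functions of $(q,p)$ only and using Proposition~\ref{Propo4}, we get $\{f\circ\phi_{t,h},g\circ\phi_{t,h}\}=\{f,g\}\circ\phi_{t,h}$ for the canonical bracket on $T^*Q$. A Poisson diffeomorphism between symplectic manifolds is a symplectomorphism, so $\phi_{t,h}^*\omega_Q=\omega_Q$.

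Alternatively, one can restrict the identity $\Psi_h^*\omega_{Q\times\mathbb R}=\omega_{Q\times\mathbb R}$ to vectors tangent to $\{t,p_t \text{ fixed}\}$ and use $dt(T\Psi_h\,\cdot)=0$ there. This kills the $dt\wedge dp_t$ term.

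\emph{Main obstacle.} I expect the hardest step to be the commutation $\Psi_h\circ\tau_s=\tau_s\circ\Psi_h$ together with the time update $t\mapsto t+h$. These require extracting the explicit $\mathbb R$-component of the implicitly defined $\Psi_h$. I would first do this for the $\theta$-family $R_d^{\mathbb R}(t,\dot t)=(t-\theta\dot t,\,t+(1-\theta)\dot t)$, where everything is linear. I would then argue that any discretization map used on the time factor is assumed translation invariant.
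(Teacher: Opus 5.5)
Your proposal is correct and follows the paper's route. Items 1 and 2 come from Propositions~\ref{Prop5} and~\ref{Propo4} applied to the canonical transformation $(\Psi_h,\phi_h)$ of Theorem~\ref{mainth}, and item 3 comes from restricting the Poisson identity for $\phi_h$ to functions of $(q,p)$ on a time slice, just as the paper does.

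Your explicit check of $\Psi_h\circ\tau_s=\tau_s\circ\Psi_h$ is a real improvement, because the paper takes it from the factorization $p_t\mapsto p_t+F$ asserted in the proof of Theorem~\ref{mainth}, and that factorization needs $\partial_t(R^{\mathbb R}_d)^1=\partial_t(R^{\mathbb R}_d)^2$. Translation invariance gives this directly; condition 2 of Definition~\ref{def:DiscreteMap2} plays no role, since it concerns derivatives in $\dot t$. Without such an assumption item 1 can fail: $R^{\mathbb R}_d(t,\dot t)=(t,\,t+\dot t+t\dot t^{2})$ gives $t_{k+1}=(1+h^2)t_k+h$ and hence $\phi_h^*\mathrm{vol}_0=(1+h^2)\,\mathrm{vol}_0$.
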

\begin{proof}

(1) and (2) are  direct consequences of Propositions \ref{Prop5} and \ref{Propo4}, respectively. 

\medskip

For the proof of (3),  we have that for each fixed $t\in\mathbb R$
\[
\phi_{t,h}=\mathrm{pr}_{T^*Q}\circ\phi_h\circ\iota_t,
\]
where $\iota_t:T^*Q\to T^*Q \times \mathbb{R}$ is the inclusion $\iota_t(\alpha)=(t;\alpha)$.  
Let $F,G\in C^\infty(T^*Q)$, and consider their time–independent lifts 
$f=\mathrm{pr}_{T^*Q}^*F$ and $g=\mathrm{pr}_{T^*Q}^*G$ on $T^*Q \times \mathbb{R}$.
For these functions the cosymplectic bracket reduces to the canonical one:
\[
\{f,g\}_0=\mathrm{pr}_{T^*Q}^*\{F,G\}_{\omega_Q}.
\]
From part (2) we know $\phi_h$ is a Poisson map for $\{\cdot,\cdot\}_0$; pulling back by $\iota_t$ gives
\[
\begin{aligned}
    \iota_t^*\{f\circ\phi_h,g\circ\phi_h\}_0
&=\iota_t^*\big(\mathrm{pr}_{T^*Q}^*\{F,G\}_{\omega_Q}\circ\phi_h\big)
=\{F,G\}_{\omega_Q}\circ(\mathrm{pr}_{T^*Q}\circ\phi_h\circ\iota_t)\\
&=\{F,G\}_{\omega_Q}\circ\phi_{t,h}.
\end{aligned}
\]
On the other hand,
\[
\iota_t^*\{f\circ\phi_h,g\circ\phi_h\}_0
=\{\iota_t^*(f\circ\phi_h),\iota_t^*(g\circ\phi_h)\}_{\omega_Q}
=\{F\circ\phi_{t,h},G\circ\phi_{t,h}\}_{\omega_Q},
\]
because $\iota_t^*\omega_0=\omega_Q$ and $\iota_t$ intertwines the brackets for time–independent functions.

Combining the previous identities,
\[
\{F\circ\phi_{t,h},G\circ\phi_{t,h}\}_{\omega_Q}
=\{F,G\}_{\omega_Q}\circ\phi_{t,h},
\qquad\forall F,G\in C^\infty(T^*Q).
\]
Therefore $\phi_{t,h}$ is a Poisson diffeomorphism of $(T^*Q,\omega_Q)$.  
Since the Poisson bracket defined by $\omega_Q$ is non-degenerate, this is equivalent to
\[
\phi_{t,h}^*\omega_Q=\omega_Q,
\]
so $\phi_{t,h}$ is a symplectomorphism. 

\end{proof}

\section{Numerical Simulations}\label{section: Numerical Simulations}

In this section we present two numerical examples to illustrate the performance and qualitative behavior of the proposed geometric integrators. These examples are chosen because they combine simplicity of interpretation with nontrivial dynamical features arising from dissipative effects and time-dependent potentials. Moreover, these are prototypical models in mechanical systems where energy dissipation and external potential play a central role, and the preservation of geometric structures (such as volume or Poisson structure) can significantly influence the accuracy and long-term stability of numerical simulations.
The simulations are carried out using an implicit symplectic Euler-type geometric integrator, which can be schematized as follows
\begin{equation}\label{eq: euler-type}
\begin{split}
    q(i+1) &= q(i) + h \tilde{H}_p(q(i), t(i), p(i+1), p_{t}(i+1) ),\\
    t(i+1) &= t(i) + h \tilde{H}_{p_t}(q(i), t(i), p(i+1), p_{t}(i+1) ),\\
    p(i+1) &= p(i) - h \tilde{H}_q (q(i), t(i), p(i+1), p_{t}(i+1) ),\\
    p_{t}(i+1) &= p_{t}(i) - h \tilde{H}_{t}(q(i), t(i), p(i+1), p_{t}(i+1) ).
\end{split}
\end{equation}
for an arbitrary Hamiltonian $\tilde{H}: T^*(Q\times \mathbb{R})\rightarrow \mathbb{R}$.
These symplectic integrators are compared against a standard fourth-order Runge–Kutta (RK4) scheme. The RK4 method provides a reference in terms of local accuracy, but it does not preserve the underlying geometry. The comparison, therefore, highlights the qualitative differences in the long-term behavior.

\subsection{Damped harmonic oscillator}\label{subsec: Damped harmonic oscillator}

We first consider a damped harmonic oscillator, a classical mechanical system used to describe oscillations under the influence of linear friction. This case provides a simple yet meaningful benchmark to test the consistency of the proposed discretization in the presence of an explicit time dependence in the Lagrangian. The formulation follows the approach in \cite{kanai1948quantization}, where the damping is incorporated through a time-exponential factor that modifies both kinetic and potential terms.
The Lagrangian of the system is given by
\begin{equation*}
    \begin{split}
        L(t,q,\dot{q}) = e^{-\gamma t} \left( \frac{1}{2} m\dot{q}^2 - \frac{1}{2} m \omega_0 q^2 \right)
    \end{split}
\end{equation*}
where $m$ is the mass, $\omega_0$ the undamped natural frequency, and $\gamma$ the damping constant. The exponential factor $e^{-\gamma t}$ models the continuous decay of energy due to frictional losses, making the system explicitly time-dependent and non-conservative.
The corresponding extended Hamiltonian is
\begin{equation*}
    \begin{split}
        H(q, t, p, p_t) = \frac{1}{2}e^{\gamma t} \frac{p^2}{m} + \frac{1}{2}e^{-\gamma t} m \omega_0 q^2 + p_t.
    \end{split}
\end{equation*}
The dynamic equations for the system are
\begin{equation*}
    \begin{split}
        \dot{q} = \frac{1}{m} e^{\gamma t} p, 
        \quad
        \dot{t} = 1, \quad
        \dot{p} = -m \omega_0 e^{-\gamma t} q, \quad
        \dot{p}_t = -\frac{\gamma}{2m} e^{\gamma t} p^2 + \frac{\gamma m \omega_0}{2} e^{-\gamma t} q^2.
    \end{split}
\end{equation*}
For these simulations, we set the parameters
$m = 1 \,\mathrm{kg}$,
$\omega_0 = 1 \,\mathrm{rad/s}$,
$\gamma = 0.2$;
and the initial conditions
$q(0) = 1 \,\mathrm{rad}$,
$p(0) = 1 \,\mathrm{kg \, m^2 \, rad/s}$.
Both integrators use a time step of $T_s = 0.1 \,\mathrm{s}$ over a total integration horizon of $T_{\text{fin}} = 50  \,\mathrm{s}$.
\begin{figure}[t!]
\centering
\begin{subfigure}{\textwidth}
    \includegraphics[width=0.49\linewidth]{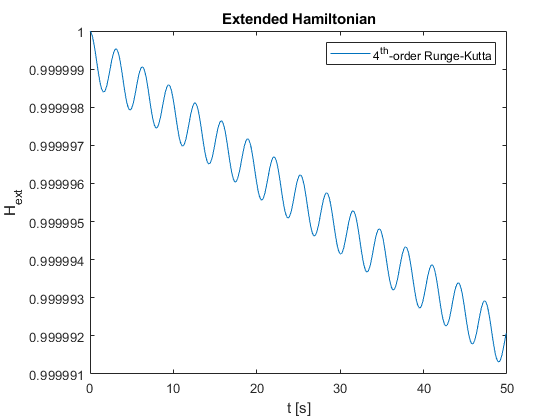}
    \includegraphics[width=0.49\linewidth]{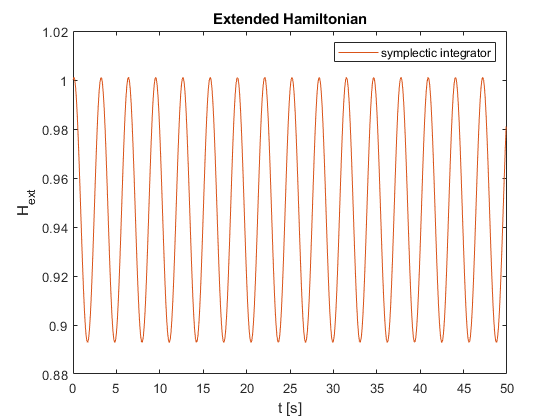}
    \caption{Extended Hamiltonian}
    \label{fig:DampedExtHam}
\end{subfigure}
\begin{subfigure}{0.49\textwidth}
    \includegraphics[width=\linewidth]{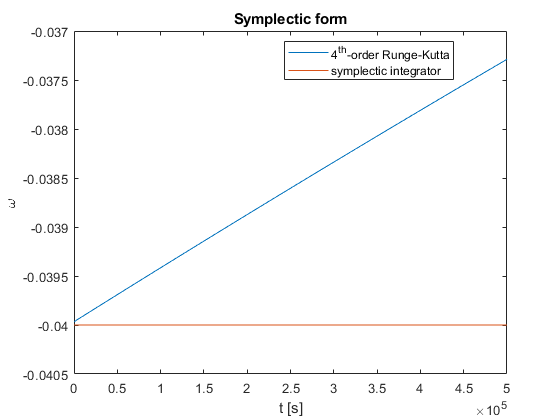} 
    \caption{Symplectic form}
    \label{fig:SymFormDamped}
\end{subfigure}%
\hfill
\begin{subfigure}{0.49\textwidth}
    \includegraphics[width=\linewidth]{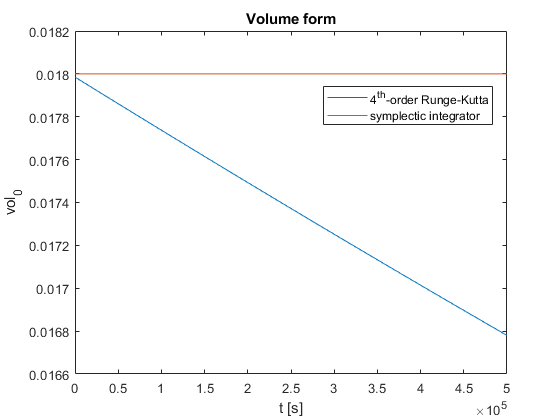}
    \caption{Volume form}
    \label{fig:VolumeFormDamped}
\end{subfigure}
\caption{Extended Hamiltonian, symplectic form, and volume form comparison between 4th order Runge-Kutta and the symplectic integrators in the damped harmonic oscillator case.}
\label{fig:Damped}
\end{figure}
The time evolutions of the extended Hamiltonians are shown in Figure \ref{fig:DampedExtHam}.
In Figure \ref{fig:SymFormDamped} we show the evolution of the symplectic form $\omega$ computed for the RK4 and the symplectic integrator.
Two motions with close initial conditions are considered.
The associated vector fields were approximated through finite differences between consecutive states of each trajectory
\begin{equation*}
    X_{k+1}^i = \begin{bmatrix}
        q_{k+1}\\
        p_{k+1}\\
    \end{bmatrix}-\begin{bmatrix}
        q_{k}\\
        p_{k}\\
    \end{bmatrix}, \quad i = 1,2.
\end{equation*}
The preservation of the discrete symplectic form was then verified by evaluating these vector fields on the symplectic form $\omega (X_1, X_2)$.
In Figure \ref{fig:VolumeFormDamped} we present the evolution of the volume form $\mathrm{vol}_0$.
The comparison includes the proposed RK4 scheme and the symplectic integrator.
For this analysis, three trajectories were integrated starting from slightly perturbed initial conditions. 
The associated vector fields were approximated through finite differences between consecutive states of each trajectory
\begin{equation*}
    X_{k+1}^i = \begin{bmatrix}
        q_{k+1}\\
        t_{k+1}\\
        p_{k+1}\\
    \end{bmatrix}-\begin{bmatrix}
        q_{k}\\
        t_{k}\\
        p_{k}\\
    \end{bmatrix}, \quad i = 1,2,3.
\end{equation*}
The discrete volume form is then evaluated as the determinant of the matrix composed of these approximate vector fields
\begin{equation*}
    \mathrm{vol}_0 = \det \begin{bmatrix}
        X^1 \, X^2 \, X^3
    \end{bmatrix}.
\end{equation*}
For these simulations, we change the damping constant and final time horizon, choosing 
$\gamma = 2 \cdot 10^{-4}$
and
$T_{\text{fin}} = 5 \cdot 10^5 \,\mathrm{s}$.

\subsection{Harmonic oscillator with time-dependent potential}\label{subsec: Harmonic oscillator with time-dependent potential}

We consider a harmonic oscillator with a time-dependent potential energy as in \cite{marthinsen2016geometric}.
In this example the time dependency affects only the potential energy of the system and does not impact the conjugate momenta.
The Hamiltonian of the system has the form
\[
H(q, t, p) = \frac{1}{2} \left( (1 + \varepsilon \sin{(\alpha t)}) q^2 + p^2 \right)
\]
where $\alpha, \varepsilon \in \mathbb{R}$ are constant parameters.
The extended Hamiltonian becomes
\[
H(q, t, p, p_t) = \frac{1}{2} \left( (1 + \varepsilon \sin{(\alpha t)}) q^2 + p^2 \right) + p_t
\]
with dynamic equations
\[
\dot{q} = p,\quad 
\dot{t} = 1,\quad 
\dot{p} = -(1 + \varepsilon \sin{(\alpha t)}) q,\quad
\dot{p}_t = -\frac{1}{2} \alpha\varepsilon \cos{(\alpha t)} q^2.
\]
We set as parameters
$\varepsilon = 0.0035$,
$\alpha = 0.0008$,
and as initial conditions
$q(0) = 1 \,\mathrm{rad}$,
$p(0) = 1 \,\mathrm{kg \, m^2 \, rad/s}$.
For the time parameters, we choose
$T_s = 0.1 \, \mathrm{s}$, and
$T_{fin} = 50 \, \mathrm{s}$.
\begin{figure}[t!]
\centering
\begin{subfigure}{\textwidth}
    \includegraphics[width=0.49\linewidth]{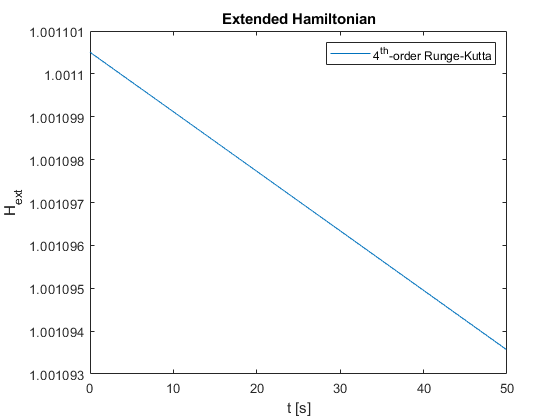} 
    \includegraphics[width=0.49\linewidth]{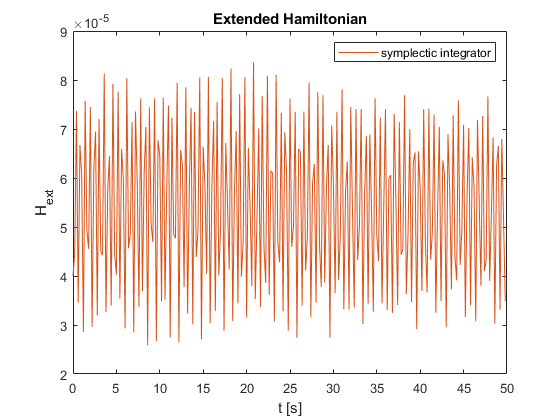}
    \caption{Extended Hamiltonian}
    \label{fig:ExtHamTimeDepHarmOsc}
\end{subfigure}
\begin{subfigure}{0.49\textwidth}
    \includegraphics[width=\linewidth]{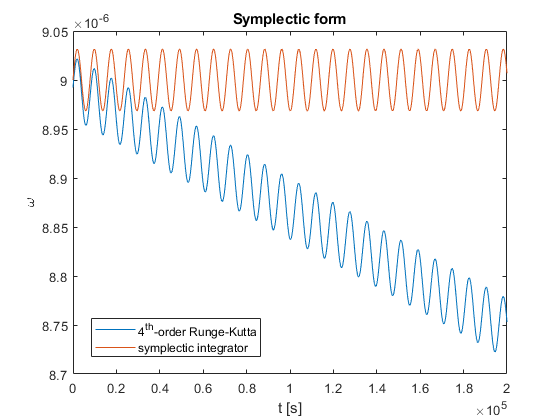} 
    \caption{Symplectic form}
    \label{fig:SymFormTimeDepHarmOsc}
\end{subfigure}%
\hfill
\begin{subfigure}{0.49\textwidth}
    \includegraphics[width=\linewidth]{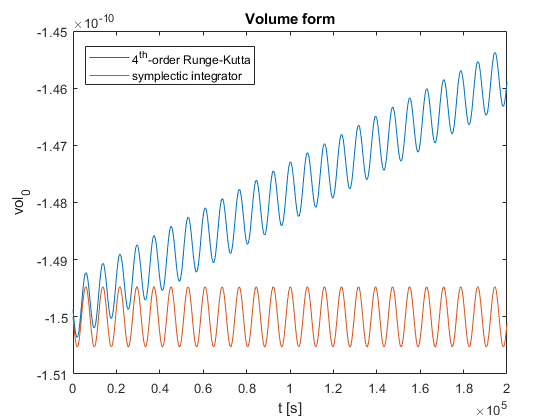}
    \caption{Volume form}
    \label{fig:VolumeFormTimeDepHarmOsc}
\end{subfigure}
\caption{Extended Hamiltonian, symplectic form, and volume form comparison between 4th order Runge-Kutta and the symplectic integrators in the harmonic oscillator with time-dependent potential case.}
\label{fig:TimeDepHarmOsc}
\end{figure}
Figure \ref{fig:ExtHamTimeDepHarmOsc} shows a comparison between the evolutions of the extended Hamiltonians computed using RK4 and the symplectic integrator.
In Figure \ref{fig:SymFormTimeDepHarmOsc} and Figure \ref{fig:VolumeFormTimeDepHarmOsc}, we show the time evolution of the symplectic and volume forms computed as in the previous example.
For these simulations, we consider a longer time horizon of $T_{fin} = 2\cdot 10^5 \, \mathrm{s}$.

\medskip

In both the examples \ref{subsec: Damped harmonic oscillator} and \ref{subsec: Harmonic oscillator with time-dependent potential} the numerical simulations produced the expected results.
The symplectic numerical integrator was designed to preserve the geometric structure, and this is verified by observing the behaviours of the symplectic and volume forms.
The latter, indeed, present a dynamic that on average tend to maintain a constant value, with small oscillations due to numerical approximations.
The Runge-Kutta method, instead, despite its high order of integration, shows dynamics that tend to diverge.
Concerning the extended Hamiltonian, it is sufficient to consider a shorter time horizon to observe that, even in this case, the Runge-Kutta method produces a slight bias that can lead to incorrect values.
It is known, instead, that the symplectic method does not preserve the Hamiltonian value exactly, but produces oscillations around an average value that coincides with the correct one.




\section{Application to an electric particle in an electromagnetic field}\label{section: Application to an electric particle in an electromagnetic field}

In this section, we show how the theory developed in this work aligns with gauge-invariant systems.
We present a particular application of the structure-preserving integrators to the electromagnetic field.
In particular, we regard the case of different gauge choices for the system, and we evaluate which quantities are preserved under transformations from one gauge to another.

We consider a charged particle moving in space $Q := \mathbb{R}^3$ under the action of a time-dependent electromagnetic field described in terms of a scalar $\phi(q,t)$ and a vector potential $A(q,t) = (A_1(q,t),A_2(q,t),A_3(q,t))$\footnote{
We recall that the electric and magnetic fields are given in terms of the scalar and vector potentials by
\[
E(q,t) = -\nabla \phi(q,t)-\partial_t A(q,t),
\qquad
B(q,t) = \nabla \times A(q,t).
\]
}, where $q=(x,y,z)$ represents the three-dimensional coordinates.
The Lagrangian of the system is given by
\begin{align*}
L(q,\dot q,t)
&=\frac12 m\norm{\dot q}^2+eA(q,t)\cdot\dot q-e\phi(q,t).
\end{align*}
We stress that the potentials are not uniquely determined, but can be chosen and modified according to the so-called gauge transformations.
Let $\chi=\chi(q,t)$ be a smooth function on $Q \times \mathbb{R}$. A gauge transformation defined by
\begin{equation*}\label{eq:gauge-transform-potentials}
A'(q,t)=A(q,t)+\nabla\chi(q,t),
\qquad
\phi'(q,t)=\phi(q,t)-\partial_t\chi(q,t)
\end{equation*}
leaves the electromagnetic fields unchanged
and correspondingly the Lagrangian changes by a total derivative (see, e.g. \cite{Goldstein2001}),
therefore the Euler-Lagrange equations remain unchanged.

\subsection{Gauge transformations as canonical transformations}

The key observation for the extended Hamiltonian formalism is that a gauge transformation induces a canonical transformation on the extended phase space.
Let
\begin{equation*}\label{eq:extended-hamiltonian-general}
\wtH(q,p,t,p_t)=\frac{1}{2m}\norm{p-eA(q,t)}^2+e\phi(q,t)+p_t
\end{equation*}
denote the extended Hamiltonian, where $p = (p_x, p_y, p_z) = \nabla_{\dot{q}} L$
indicate the canonical momenta. 
The canonical symplectic form on the extended phase space is
\begin{equation}\label{eq: canonical form electric}
    \omega = \mathrm{d} x \wedge \mathrm{d} p_x + \mathrm{d} y \wedge \mathrm{d} p_y + \mathrm{d} z \wedge \mathrm{d} p_z + \mathrm{d} t \wedge \mathrm{d} p_t.
\end{equation}
Given a gauge function $\chi(q,t)$, we define a gauge-dependent function on the phase state
\begin{equation}\label{eq:gauge-canonical-map}
G_\chi(q,p,t,p_t)=\bigl(q,\,p+e\nabla\chi(q,t),\,t,\,p_t+e\partial_t\chi(q,t)\bigr).
\end{equation}
The following proposition explains the relation between gauge and canonical transformations.
\begin{prop}\label{prop:gauge-canonical}
Let $(\phi,A)$ and $(\phi',A')$ be two pairs of scalar and vector potentials related by the gauge transformation induced by a function $\chi$ on $Q \times \mathbb{R}$
\[
A'=A+\nabla\chi,
\qquad
\phi'=\phi-\partial_t\chi.
\]
Then the map $G_\chi$ defined in \eqref{eq:gauge-canonical-map} is a canonical transformation on the extended phase space, and the corresponding extended Hamiltonians satisfy
\begin{equation}\label{eq:hamiltonians-conjugate}
\wtH'\circ G_\chi=\wtH.
\end{equation}
\end{prop}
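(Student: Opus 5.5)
The plan is to verify directly the two conditions of the definition of canonical transformation for the pair $(G_\chi,\varphi_\chi)$, and then to check \eqref{eq:hamiltonians-conjugate} by substitution. Here the projected map on $T^*Q\times\mathbb{R}$ is
\[
\varphi_\chi(q,p,t)=\bigl(q,\,p+e\nabla\chi(q,t),\,t\bigr).
\]

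\emph{Step 1: diffeomorphism and projectability.} $G_\chi$ is smooth with smooth inverse $G_{-\chi}$, so it is a diffeomorphism, and the same holds for $\varphi_\chi$, whose inverse is $\varphi_{-\chi}$. The $(q,p,t)$-components of $G_\chi$ do not involve $p_t$, because $\chi$ depends only on $(q,t)$. Hence
\[
\mu\circ G_\chi=\varphi_\chi\circ\mu
\]
holds identically, which is condition 1 of the definition.

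\emph{Step 2: symplecticity.} I would argue intrinsically. $G_\chi$ is the fibrewise translation of $T^*(Q\times\mathbb{R})$ by the exact 1-form $e\,d\chi$ on $Q\times\mathbb{R}$. For the canonical 1-form $\theta=p_A\,dq^A+p_t\,dt$ one gets
\[
G_\chi^*\theta=\theta+e\,\pi_{Q\times\mathbb{R}}^*d\chi .
\]
Since $d(d\chi)=0$, it follows that $G_\chi^*\omega_{Q\times\mathbb{R}}=\omega_{Q\times\mathbb{R}}$.

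As a coordinate check against \eqref{eq: canonical form electric}, note that
\[
G_\chi^*\omega-\omega=e\Bigl(\sum_i dq^i\wedge d(\partial_i\chi)+dt\wedge d(\partial_t\chi)\Bigr).
\]
This vanishes because the Hessian of $\chi$ in the variables $(q,t)$ is symmetric. This is condition 2.

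As a by-product, $G_\chi$ commutes with the translations $\tau_s$. Consequently Propositions \ref{Propo4} and \ref{Prop5} also apply to $\varphi_\chi$, although the statement does not require this.

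\emph{Step 3: conjugacy of Hamiltonians.} Substituting $G_\chi$ into $\wtH'$ built from $(\phi',A')$ gives
\[
\frac{1}{2m}\norm{p+e\nabla\chi-eA-e\nabla\chi}^2+e\phi-e\partial_t\chi+p_t+e\partial_t\chi=\wtH .
\]

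There is no real obstacle. The only delicate point is bookkeeping the signs, which must match the conventions $\phi'=\phi-\partial_t\chi$ and $p_t\mapsto p_t+e\partial_t\chi$ so that the $\partial_t\chi$ terms cancel in Step 3, together with the Hessian-symmetry cancellation in Step 2.
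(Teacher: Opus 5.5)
Your proof is correct and follows the paper's argument. The paper likewise pulls back the canonical one-form to obtain $G_\chi^*\Theta=\Theta+e\,d\chi$, concludes that $G_\chi$ preserves $\omega$, and checks $\wtH'\circ G_\chi=\wtH$ by direct substitution. Your Step 1 (invertibility and $\mu\circ G_\chi=\varphi_\chi\circ\mu$) and your remark on commutation with $\tau_s$ go slightly beyond the paper's proof, which establishes only symplecticity; your version therefore covers the paper's pair-based definition of canonical transformation more completely.
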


\begin{proof}
We first check the Hamiltonian identity. By definition,
\[
\wtH'(q,p',t,p_t')
=
\frac{1}{2m}\norm{p'-eA'(q,t)}^2 + e\phi'(q,t) + p_t'.
\]
Substituting
\[
p'=p+e\nabla\chi(q,t),
\quad
p_t'=p_t+e\partial_t\chi(q,t),
\quad
A'=A+\nabla\chi,
\quad
\phi'=\phi-\partial_t\chi,
\]
we obtain
\begin{align*}
\wtH'\circ G_\chi
&=
\frac{1}{2m}\norm{p+e\nabla\chi-e(A+\nabla\chi)}^2
+e(\phi-\partial_t\chi)+p_t+e\partial_t\chi \\
&=
\frac{1}{2m}\norm{p-eA}^2+e\phi+p_t = \wtH.
\end{align*}
This proves \eqref{eq:hamiltonians-conjugate}.

To verify that $G_\chi$ is canonical, consider the canonical one-form on the extended phase space,
\[
\Theta=p\cdot \dd q+p_t\,\dd t.
\]
Then
\begin{align*}
G_\chi^*\Theta
&=(p+e\nabla\chi)\cdot \dd q+(p_t+e\partial_t\chi)\,\dd t
=p\cdot\dd q+p_t\,\dd t+e\bigl(\nabla\chi\cdot\dd q+\partial_t\chi\,\dd t\bigr)\\
&=\Theta+e\,\dd\chi.
\end{align*}
Taking exterior derivative gives
\[
G_\chi^*(\omega) =
G_\chi^*(-\dd\Theta) =
-\dd\Theta
= \omega,
\]
so $G_\chi$ preserves the canonical symplectic form \eqref{eq: canonical form electric}.
Hence $G_\chi$ is canonical.
\end{proof}

\begin{remark}\label{rem:gauge-physical-vs-canonical}
We highlight here the quantities that are preserved through the map $G_\chi$.
Observe that the canonical momentum is not gauge invariant. Indeed, under the gauge
transformation
\[
A' = A+\nabla\chi, \qquad \phi'=\phi-\partial_t\chi,
\]
the corresponding canonical transformation on the extended phase space is
\[
G_\chi(q,p,t,p_t)
=
\bigl(q,\; p+e\nabla\chi(q,t),\; t,\; p_t+e\partial_t\chi(q,t)\bigr).
\]
Therefore, if \(p'=p+e\nabla\chi(q,t)\), then the mechanical momentum satisfies
\[
\Pi'
=
p'-eA'(q,t)
=
p+e\nabla\chi(q,t)-e\bigl(A(q,t)+\nabla\chi(q,t)\bigr)
=
p-eA(q,t)
=
\Pi .
\]
Thus, the gauge-invariant momentum is the mechanical momentum
\[
\Pi = p-eA(q,t),
\]
whereas the canonical momentum \(p\) itself depends on the chosen gauge.
Consequently, when comparing numerical solutions obtained in different gauges,
one should compare the physical observables \(q(t)\) and \(\Pi(t)\), or else first
transform the canonical momenta according to the gauge map \(G_\chi\).
\end{remark}


Proposition \ref{prop:gauge-canonical} states that it's possible to switch from a gauge choice to another while preserving the canonical structure.
We state here the discrete counterpart.

\begin{prop}\label{prop:transported-discrete-map}
If $\Phi_h^{A}$ is a canonical map on the extended phase space, then the transported map $\widehat{\Phi}_h^{B}$ defined by 
\begin{equation}\label{eq:discrete-gauge-transport}
\widehat{\Phi}_h^{B}
=
G_{\chi,t_{n+1}}\circ \Phi_h^{A} \circ G_{\chi,t_n}^{-1},
\end{equation}
where $G_{\chi,t_n}$ denotes the canonical gauge transformation evaluated at time $t_n$, is also canonical. Moreover, by construction, the discrete evolutions in the two gauges $z_n^{A}$ and $z_n^{B}$ are exactly related by the gauge transformation at the discrete times.
\end{prop}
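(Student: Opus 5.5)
The statement has two parts: canonicity of the transported map, and exact intertwining of the two discrete evolutions. I would treat them separately. Throughout, $G_{\chi,t_n}$ is read as the map $G_\chi$ of \eqref{eq:gauge-canonical-map}, acting on points of the extended phase space whose time coordinate is $t_n$.

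\textbf{Canonicity.} The first step is to observe that \Cref{prop:gauge-canonical} already gives what is needed. The computation $G_\chi^*\Theta=\Theta+e\,\dd\chi$ shows that $G_\chi$ is a symplectomorphism of $(T^*(Q\times\mathbb{R}),\omega)$. Its inverse $G_{-\chi}$ is symplectic by the same computation. Hence $G_{\chi,t_n}^{-1}$ and $G_{\chi,t_{n+1}}$ are symplectic on the relevant time slices, or globally if read as $G_\chi$ itself. Since $\Phi_h^A$ is symplectic by hypothesis, the composition
\[
\widehat{\Phi}_h^B=G_{\chi,t_{n+1}}\circ\Phi_h^A\circ G_{\chi,t_n}^{-1}
\]
satisfies $(\widehat\Phi_h^B)^*\omega=\omega$ by functoriality of pullback.

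To obtain canonicity in the full sense of the Definition, I also need a projected map. I would check that $G_\chi$ is $\mu$-projectable: its $(q,p,t)$-components do not depend on $p_t$. It therefore descends to $g_\chi(q,p,t)=(q,p+e\nabla\chi,t)$ with $\mu\circ G_\chi=g_\chi\circ\mu$. If $(\Phi_h^A,\phi_h^A)$ is a canonical pair, then $\bigl(\widehat\Phi_h^B,\;g_\chi\circ\phi_h^A\circ g_\chi^{-1}\bigr)$ is again a canonical pair. One can add that $G_\chi$ commutes with the fiber translations $\tau_s$, so that \Cref{Prop5} continues to apply to the transported method.

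\textbf{Intertwining.} Define $z^B_n:=G_{\chi,t_n}(z^A_n)$, where $z^A_{n+1}=\Phi^A_h(z^A_n)$. A one-line induction then gives
\[
z^B_{n+1}=G_{\chi,t_{n+1}}\Phi_h^A(z^A_n)=G_{\chi,t_{n+1}}\Phi_h^A G_{\chi,t_n}^{-1}(z^B_n)=\widehat\Phi^B_h(z^B_n).
\]

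\textbf{Main obstacle.} The delicate point is the meaning of ``evaluated at time $t_n$''. If $G_{\chi,t_n}$ were the map with $t$ frozen at $t_n$ while acting on all points, it would not shift $p_t$ consistently, and it might fail to be symplectic on the extended space. I would resolve this by noting that $\Phi^A_h$ advances the time coordinate exactly by $h$, since $t_{k+1}=t_k+h$ in the construction of Theorem~\ref{mainth}. Because $G_\chi$ preserves $t$, its restriction to the slice $\{t=t_n\}$ coincides with $G_{\chi,t_n}$, and its restriction to $\{t=t_{n+1}\}$ coincides with $G_{\chi,t_{n+1}}$. On the image of $\Phi^A_h$, the transported map is therefore exactly the conjugation $G_\chi\circ\Phi_h^A\circ G_\chi^{-1}$ by a global symplectomorphism, which reduces the claim to the first step.
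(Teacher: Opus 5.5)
Your proof is correct and follows the paper's argument. Canonicity comes from composing the symplectomorphisms $G_{\chi,t_{n+1}}$, $\Phi_h^A$ and $G_{\chi,t_n}^{-1}$, and the intertwining $z_{n+1}^B=G_{\chi,t_{n+1}}(z_{n+1}^A)$ follows by direct substitution. Your additional checks go beyond the paper's two-line proof, which simply asserts that each $G_{\chi,t_n}$ is canonical: $\mu$-projectability of $G_\chi$, its commutation with $\tau_s$, and the remark that a frozen-time $G_{\chi,t_n}$ need not be symplectic on the whole extended space, so the statement is best read as conjugation by the global $G_\chi$ along a $\Phi_h^A$ that maps $\{t=t_n\}$ to $\{t=t_{n+1}\}$.
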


\begin{proof}
Since each $G_{\chi,t_n}$ is canonical and $\Phi_h^{A}$ is canonical, the composition
\[
G_{\chi,t_{n+1}}\circ \Phi_h^{A} \circ G_{\chi,t_n}^{-1}
\]
is canonical as well. The second statement follows directly from the definition: if
\[
z_n^{B} = G_{\chi,t_n}(z_n^{A}),
\]
then
\[
z_{n+1}^{B}
=
\widehat{\Phi}_h^{B}(z_n^{B})
=
G_{\chi,t_{n+1}}\bigl(\Phi_h^{A}(z_n^{A})\bigr)
=
G_{\chi,t_{n+1}}(z_{n+1}^{A}).
\]
Thus the two discrete trajectories remain exactly gauge-related at every step.
\end{proof}
Proposition \ref{prop:transported-discrete-map} clarifies that, rather than discretizing independently in two gauge-equivalent coordinate descriptions, one may transport the discrete canonical map by conjugation.
This operation preserves the volume form, the Poisson bracket and hence the symplectic structure.

\begin{remark}\label{rem:naive-vs-transported}
We underline that the transported discrete map $\widehat{\Phi}_h^{2}$ does not, in general, coincide with the map obtained by applying the same rule directly to $\Phi_h^{2}$.
The discrepancy comes from the explicit time dependence of the gauge transformation, and depends on the order of the geometric integrator.
\end{remark}


\subsection{Gauge analysis}\label{subsec: Gauge analysis}

In this section, we present several simulations that empirically validate the results obtained in the previous section.
We start by considering the following vector and scalar potentials (we will refer to this choice as `original gauge')
\[ 
A(q,t) = \beta(t)\begin{bmatrix}
    x-y \\ y+x \\ 0
\end{bmatrix}, \quad 
\phi(q,t) = 0,
\]
where $\beta(t)$ is a time-dependent function.
With these potentials, the electric and magnetic fields are given by
\[
E(q,t) 
=
-\dot\beta(t)
\begin{bmatrix}
x-y \\ x+y \\ 0    
\end{bmatrix},
\quad
B(q,t)
=
\beta(t)
\begin{bmatrix}
0 \\ 0 \\ 2    
\end{bmatrix},
\]
and thus the model is non-autonomous and electromagnetic.
The Lagrangian of the system reads
\begin{equation*}\label{eq:lagrangian-initial}
L(x,y,\dot x,\dot y,t)
=
\frac12 m(\dot x^2+\dot y^2)
+e\beta(t)((x-y)\dot x + (x+y)\dot y)
\end{equation*}
with canonical momenta
\begin{equation*}\label{eq:canonical-momenta-initial}
\begin{split}
p_x =& \frac{\partial L}{\partial \dot{x}} = m\dot x + e\beta(t)(x-y),
\\
p_y =& \frac{\partial L}{\partial \dot{y}} = m\dot y + e\beta(t)(x+y),
\end{split}
\end{equation*}
and the mechanical momenta
\[
\begin{split}
\Pi_x := p_x- eA_1 =& p_x - e\beta(t)(x-y),
\\
\Pi_y := p_y- eA_2 =& p_y - e\beta(t)(x+y).
\end{split}
\]
The non-autonomous Hamiltonian is then
\begin{equation*}\label{eq:hamiltonian-initial}
H(x,y,p_x,p_y,t)
=
\frac{1}{2m}\left(p_x - e\beta(t)(x-y)\right)^2
+
\frac{1}{2m}\left(p_y - e\beta(t)(x+y)\right)^2,
\end{equation*}
and the associated extended Hamiltonian is
\begin{equation*}\label{eq:extended-hamiltonian-initial}
\wtH(x,y,p_x,p_y,t,p_t) = H(x,y,p_x,p_y,t)+p_t.
\end{equation*}
We consider the following choices of gauges, with their respective vector and scalar potentials (we drop their coordinates and time dependencies).
\begin{enumerate}
    \item Coulomb, characterized by the null gradient of the vector potential, i.e., $\nabla \cdot A_{\text{c}} = 0$:
    \begin{align*}
    \chi_{\text{c}} = -x^2 \beta(t),
    \quad
    A_{\text{c}} = \beta(t)\begin{bmatrix}
    -x-y \\ y+x \\ 0
    \end{bmatrix}, 
    \quad
    \phi_{\text{c}} = x^2 \dot\beta(t);
    \end{align*}
    \item symmetric, that requests $\partial_y (A_{\text{s}})_1 + \partial_x (A_{\text{s}})_2 = 0$:
    \begin{align*}
    \chi_{\text{s}} = -\frac{1}{2}(x^2+y^2) \beta(t),
    \quad
    A_{\text{s}} = \beta(t)\begin{bmatrix}
    -y \\ x \\ 0
    \end{bmatrix},
    \quad
    \phi_{\text{s}} = \frac{1}{2}(x^2+y^2) \dot{\beta}(t);
    \end{align*}
    \item Landau, with vector potential linear in $x$ in the second component and zero in the others, i.e., $(A_{\text{La}})_2 = f(t)x$, $(A_{\text{La}})_1 = (A_{\text{La}})_3 = 0$:
    \begin{align*}
    \chi_{\text{La}} =
    -\frac{1}{2}(x-y)^2\beta(t),
    \quad
    A_{\text{La}} = \beta(t)\begin{bmatrix}
    0 \\ 2x \\ 0
    \end{bmatrix},
    \quad
    \phi_{\text{La}} = \frac{1}{2}(x-y)^2 \dot{\beta}(t);
    \end{align*}
    \item Lorenz, with scalar and vector potential invariant under Lorez transformations, i.e., $\nabla \cdot A_{\text{Lo}} + \partial_t \phi_{\text{Lo}} = 0$:
    \begin{align*}
    \chi_{\text{Lo}} = - 2 \int^t_0 \beta(\tau) d \tau,
    \quad
    A_{\text{Lo}} = \beta(t)\begin{bmatrix}
    x-y \\ y+x \\ 0
    \end{bmatrix},
    \quad
    \phi_{\text{Lo}} = 2 \beta(t).
    \end{align*}
\end{enumerate}
The corresponding non-autonomous extended Hamiltonian are
\[
\begin{split}
\tilde H_{\mathrm{c}}(x,y,p_x,p_y,t,p_t)
=&
\frac{1}{2m}\left(p_x - e\beta(t)(-x-y)\right)^2\\
&+
\frac{1}{2m}\left(p_y - e\beta(t)(x+y)\right)^2
+
ex^2 \dot\beta(t) + p_t,
\\
\tilde H_{\mathrm{s}}(x,y,p_x,p_y,t,p_t)
=&
\frac{1}{2m}\left(p_x + e\beta(t)y\right)^2 +
\frac{1}{2m}\left(p_y - e\beta(t)x\right)^2\\
&+
\frac{e}{2}(x^2+y^2) \dot\beta(t) + p_t,
\\
\tilde H_{\mathrm{La}}(x,y,p_x,p_y,t,p_t)
=&
\frac{1}{2m}\left(p_x\right)^2+
\frac{1}{2m}\left(p_y - 2e\beta(t)x\right)^2\\
&+
\left(xy - \frac{1}{2}(x^2+y^2)\right) \dot{\beta}(t)
+ p_t,\\
\tilde H_{\mathrm{Lo}}(x,y,p_x,p_y,t,p_t)
=&
\frac{1}{2m}\left(p_x - e\beta(t)(x-y)\right)^2\\
&+
\frac{1}{2m}\left(p_y - e\beta(t)(x+y)\right)^2
+
2e \beta(t) + p_t.
\end{split}
\]

\subsection{Numerical experiments}

We compare the results obtained by applying the symplectic integrator to Hamiltonian systems with the different gauge choices.
We consider the vector and scalar potentials defined in Section \ref{subsec: Gauge analysis} with the function $\beta$ defined as
\[
\beta(t) = B_0 (1 + \varepsilon \sin(\omega t)).
\]
We choose the parameters
$B_0 = 0.05$,
$\varepsilon = 10$,
$\omega = 2$, 
the mass and the charge are set to
$m = 1 \mathrm{kg}$,
$e = 1  \mathrm{C}$,
while for the initial conditions, we put
$x(0) = 0.1 \mathrm{m}$,
$y(0) = 0.4 \mathrm{m}$,
$p_x(0) = 0.5 \mathrm{kg \, m/s}$,
and 
$p_y(0) = -0.2 \mathrm{kg \, m/s}$.
We implement the Euler-type integrator as defined in \eqref{eq: euler-type} with an integration step $T_s = 0.001 \mathrm{s}$, and for a time horizon of $T_{fin} = 20 \mathrm{s}$.
The trajectory simulations for the different gauge choices are shown in Figure \ref{fig:gauge1}.
Here, we compare the trajectories for $x$ and $y$, the canonical momenta $p_x$ and $p_y$, and the mechanical momenta $\Pi_x$ and $\Pi_y$.
The experimental results show the behaviors that we expect.
The trajectories on the $xy$ plane are the same for all the choices of the gauge, as well as the mechanical momenta $\Pi_x$ and $\Pi_y$, with a small deviation depending on the integration order.
Figure \ref{fig:gauge2} highlights these differences by showing how the various trajectories, obtained with different gauge choices, differ from the original one, 
as prescribed by the Remark \ref{rem:naive-vs-transported}.
Moreover, we can observe clearly different behaviors regarding canonical momenta as highlighted by Remark \ref{rem:gauge-physical-vs-canonical}.

\begin{figure}[t!]
\centering
\begin{subfigure}{\textwidth}
\centering
    \includegraphics[width=0.85\linewidth]{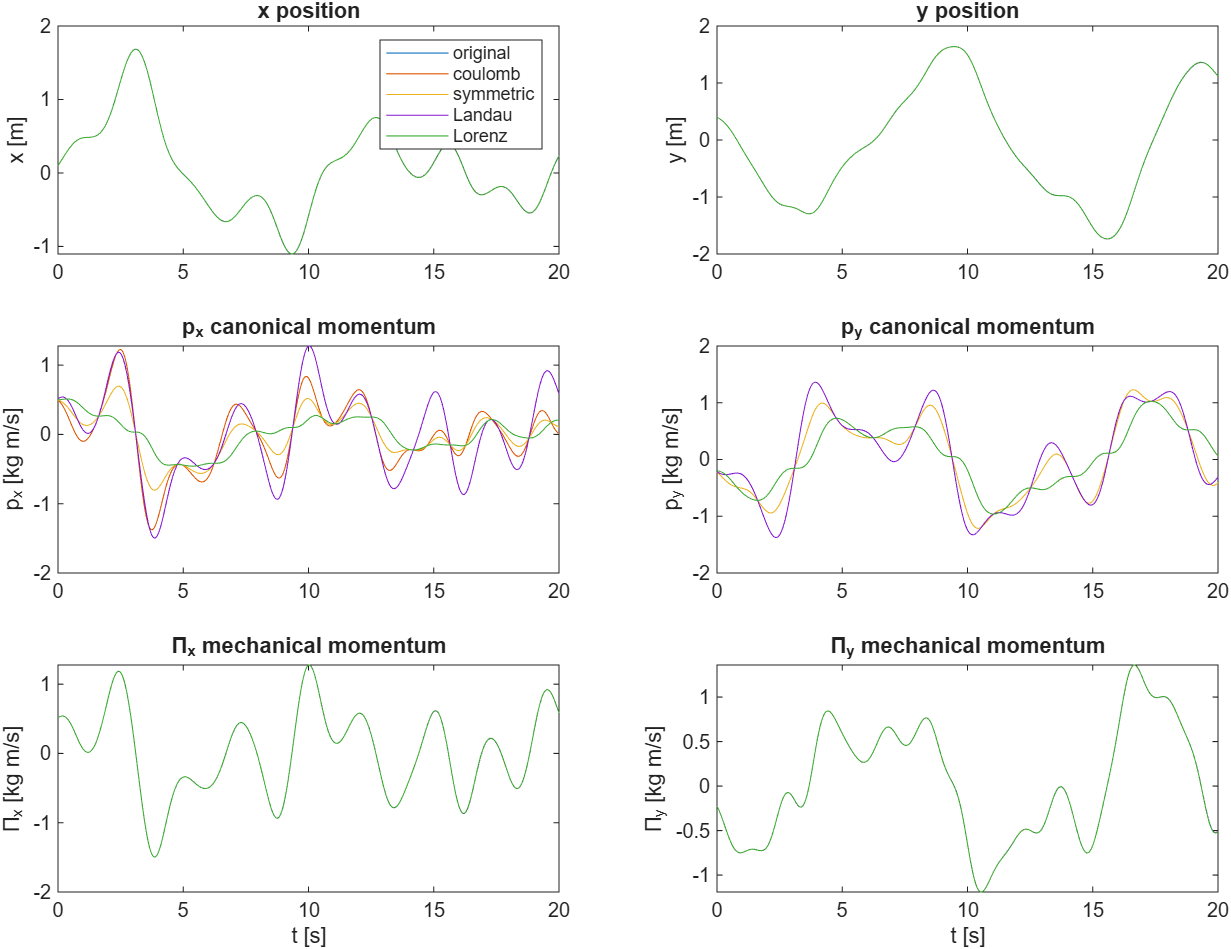}
    \caption{Trajectories, canonical momenta and mechanical momenta.}
    \hfill
    \label{fig:gauge1}
\end{subfigure}
\begin{subfigure}{\textwidth}
\centering
    \includegraphics[width=0.85\linewidth]{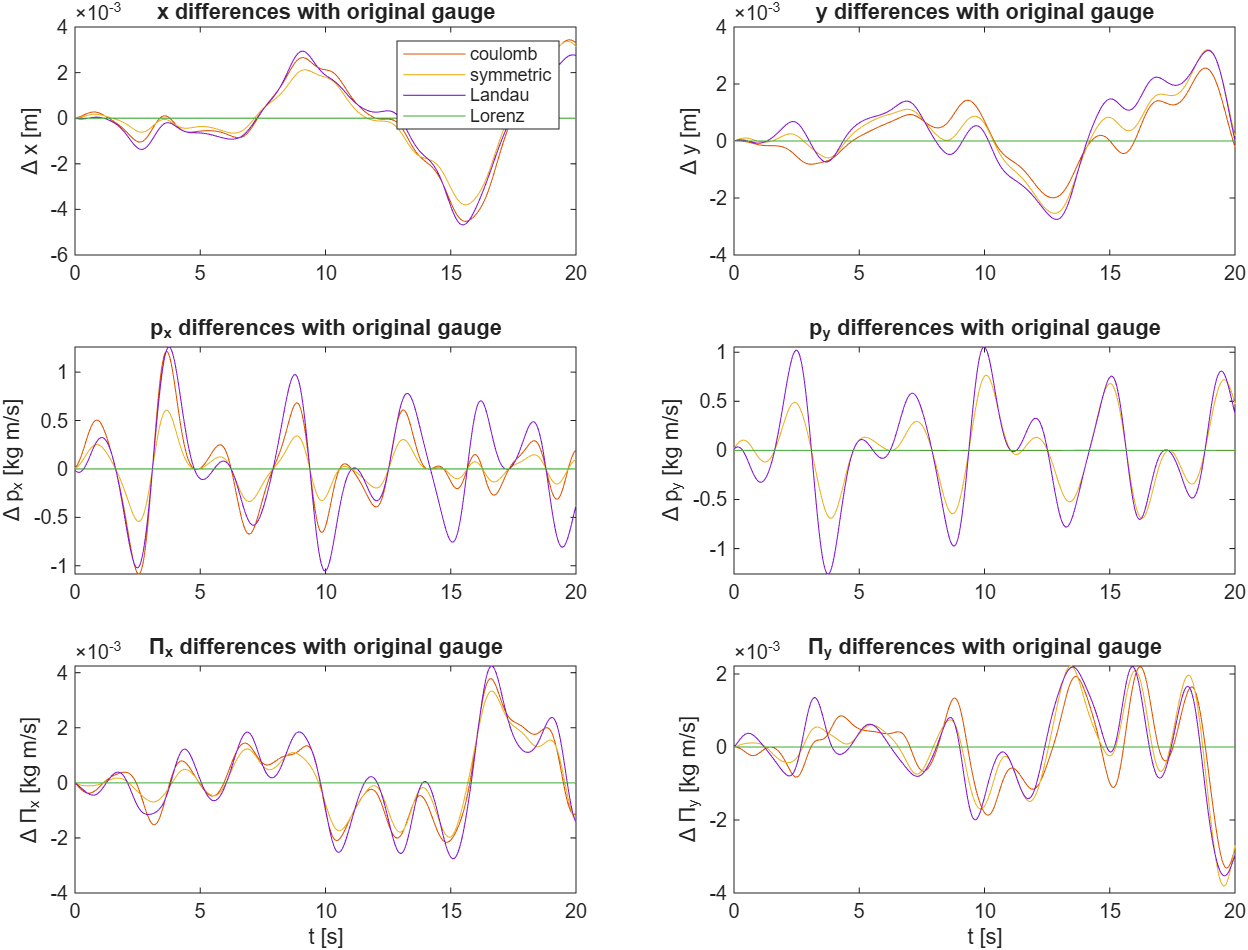}
    \caption{Differences with respect to the original gauge.}
    \label{fig:gauge2}
\end{subfigure}
\caption{$x$ and $y$ trajectories, canonical momenta $p_x$ and $p_y$, and mechanical momenta $\Pi_x$ and $\Pi_y$ for the different choices of gauges.}
\label{fig:gauge}
\end{figure}

\section{Conclusions and Future Work}\label{section: Conclusions and Future Work}

In this paper we have presented a geometric framework for discrete canonical transformations in the non-autonomous Hamiltonian setting.  
Starting from the (presymplectic) cosymplectic formulation on $T^*Q \times \mathbb{R}$, we introduced a generalized canonical transformation on the extended phase space $T^*(Q \times \mathbb{R})$ and showed that its projection acts as a Poisson morphism.  
This construction naturally induces a discrete flow that preserves the cosymplectic volume, the Poisson bracket $\{\cdot,\cdot\}_0$, and the symplectic form on each time fiber $T^*Q \times \{t\}$.  
The resulting integrator provides a geometric and structure-preserving alternative to traditional discretization schemes for non-autonomous systems.  
Numerical simulations confirm that the proposed method avoids artificial energy drift and accurately reproduces the geometric structure of the phase space over long integration times.

Moreover, by expressing the discrete map as the pullback of the canonical symplectic form on $T^*(Q \times \mathbb{R})$, we established a global, coordinate-free criterion for structure preservation that generalizes the local, matrix-based approach of Marthinsen and Owren~\cite{marthinsen2016geometric} and extends it to the cosymplectic setting.

Future work will first explore the application of the proposed integrator to the discretization of geometric filters and observers on Lie groups, particularly those based on the second-order minimum-energy framework developed in \cite{saccon2015second} and further explored in \cite{rigo2022second, rigo2023second, rigo2024state, trotti2025geometric}.
In that formulation, the underlying estimation problem is a \emph{time-dependent optimal control problem} on the tangent or cotangent bundle of a Lie group, where the Hamiltonian depends explicitly on time through the control and measurement processes.  
This makes the filtering dynamics inherently non-autonomous, and therefore naturally suited to the cosymplectic framework introduced in this paper.  
By discretizing the extended Hamiltonian system associated with the filtering problem, we aim to obtain discrete update maps that remain symplectic (or cosymplectic) on each time slice, thereby preserving group invariance, geometric consistency, and energy properties of the estimation dynamics.  
This approach would yield a new class of \emph{structure-preserving recursive filters on Lie groups}, combining the theoretical rigor of non-autonomous Hamiltonian discretization with the practical robustness required for real-time estimation.

Beyond this direct application, the discrete geometric framework developed here could also be extended toward \emph{variational formulations of estimation}, leading to \emph{discrete variational filters} derived from a discrete action principle.  
Such extensions would provide a unified variational–geometric approach to filtering and control on manifolds, potentially integrating estimation, prediction, and learning within the same symplectic–cosymplectic framework.


\bibliography{mybib}
\bibliographystyle{ieeetr}

\end{document}